\newtheorem{thm}{Theorem}[section]
\newtheorem{prop}[thm]{Proposition}
\newtheorem{lem}[thm]{Lemma}
\newtheorem{cor}[thm]{Corollary}
\theoremstyle{definition}
\theoremstyle{definition}
\newtheorem{eg}[thm]{Example}
\theoremstyle{remark}
\newcommand{\Hom}{{\rm Hom}}
\newcommand{\Z}{\mathbb{Z}}
\newcommand{\R}{\mathbb{R}}
\title{Homotopy types of the Hom complexes of graphs}
\begin{document}

\author{Takahiro Matsushita}
\address{Department of Mathematics, Kyoto University, KitaShirakawa Oiwake-cho Sakyo-ku, Kyoto 606-8502 Japan}
\email{mtst@math.kyoto-u.ac.jp}

\begin{abstract}
The Hom complex $\Hom(T,G)$ of graphs is a CW-complex associated to a pair of graphs $T$ and $G$, considered in the graph coloring problem. It is known that certain homotopy invariants of $\Hom(T,G)$ give lower bounds for the chromatic number of $G$.

For a fixed finite graph $T$, we show that there is no homotopy invariant of $\Hom(T,G)$ which gives an upper bound for the chromatic number of $G$. More precisely, for a non-bipartite graph $G$, we construct a graph $H$ such that $\Hom(T,G)$ and $\Hom(T,H)$ are homotopy equivalent but $\chi(H)$ is much larger than $\chi(G)$. The equivariant homotopy type of $\Hom(T,G)$ is also considered.
\end{abstract}

\maketitle

\section{Introduction}


The application of algebraic topology to the graph coloring problem started from Lov\'asz's proof \cite{Lovasz} of the Kneser conjecture. Lov\'asz introduced the neighborhood complex $N(G)$ of a graph $G$ and showed that if $N(G)$ is $n$-connected, then the chromatic number $\chi(G)$ of $G$ is greater than $n + 2$. 

The Hom complex $\Hom(T,G)$ is a CW-complex associated to a pair of graphs $T$ and $G$, considered in the context of the graph coloring problem \cite{BK1}. It is known that the neighborhood complex $N(G)$ and $\Hom(K_2,G)$ are homotopy equivalent.

A graph $T$ is a {\it homotopy test graph} \cite{Kozlov book} if the inequality
\begin{eqnarray}
\chi(G) > {\rm conn}(\Hom(T,G)) + \chi(T)
\end{eqnarray}
holds for every graph $G$. Here for a space $X$, ${\rm conn}(X)$ denotes the largest integer $n$ such that $X$ is $n$-connected. Thus Lov\'asz's result implies that $K_2$ is a homotopy test graph. Other examples of homotopy test graphs are given by complete graphs $K_n$ with $n \geq 3$ \cite{BK1}, odd cycles $C_{2r+1}$ \cite{BK2}, bipartite graphs \cite{Matsushita 1}, and some of the stable Kneser graphs \cite{Schultz}. For further development and related topics, we refer to \cite{DS}, \cite{HL}, and \cite{Kozlov book}.

Here we give a brief explanation of how to obtain the lower bound for $\chi(G)$ from $\Hom(T,G)$. Consider a group action on the graph $T$. In many cases, we consider a cyclic group $\Z_2$ of order 2 and a $\Z_2$-action flipping some edge of $T$. Then this action on $T$ induces a group action on $\Hom(T,G)$, and the graph homomorphism $f: G_1 \rightarrow G_2$ induces an equivariant map from $\Hom(T,G_1)$ to $\Hom(T,G_2)$. Thus if there is no equivariant map from $\Hom(T,G)$ to $\Hom(T,K_n)$, then $\chi(G)$ is greater than $n$, and we have a lower bound.

Therefore it is natural to ask that the chromatic number of $G$ is determined by the (equivariant) homotopy type of $\Hom(T,G)$. There are several works concerning this question. Walker \cite{Walker} showed that there are graphs $G_1$ and $G_2$ such that $\Hom(K_2,G_1)$ and $\Hom(K_2,G_2)$ are $\Z_2$-homotopy equivalent but their chromatic numbers are different (see Section 12 of \cite{Walker} or Example \ref{Example 3.3}). The author constructed graphs $H_1$ and $H_2$ that $\Hom(K_2,G_1)$ and $\Hom(K_2,G_2)$ are homeomorphic but their chromatic numbers are different. Kozlov \cite{Kozlov 2} considered simple tests for the chromatic numbers, using the homology groups of the Hom complexes (homology tests), and showed that there are differences between these tests and the actual chromatic numbers.

Now we state the main result in this paper. This is a further generalization of Walker's example.

\begin{thm} \label{Theorem 1.1}
Let $T$ be a finite graph and $G$ a non-bipartite graph. Then for each integer $n$, there is a graph $H$ such that $\Hom(T,G)$ and $\Hom(T,H)$ are homotopy equivalent but $\chi(H) > n$. Moreover, if $G$ has no looped vertices, then neither has $H$.
\end{thm}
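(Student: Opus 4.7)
The plan is to build $H$ by combining $G$ with a high-chromatic gadget in a way that is invisible to $\Hom(T,-)$ up to homotopy, using the standard fold calculus for Hom complexes. The key invariance tool is that if distinct vertices $v,w$ of a graph $A$ satisfy $N(v) \subseteq N(w)$, then the inclusion $A - v \hookrightarrow A$ induces a homotopy equivalence $\Hom(T, A - v) \simeq \Hom(T, A)$; hence any zigzag of such folds and their inverses (``unfolds'') preserves the homotopy type of $\Hom(T,-)$, while leaving the chromatic number unconstrained. This mismatch is what makes the theorem possible.

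For the construction of $H$, I would use the non-bipartiteness of $G$ to fix a homomorphism $C_{2r+1} \to G$ from an odd cycle, and then pick a loop-free graph $K$ with $\chi(K) > n$ that admits a homomorphism from $C_{2r+1}$ (for instance $K = K_m$ for some $m > n$). I would form $H$ as an amalgamation of $G$ and $K$ along this common odd cycle, padded with additional ``dominated'' vertices arranged so that $H$ is connected to $G$ by a zigzag of folds and unfolds. Since $K$ is contained in $H$ we get $\chi(H) \geq \chi(K) > n$, and if both $G$ and $K$ are loop-free then so is $H$, which gives the last clause of the theorem.

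The main obstacle is exhibiting this zigzag. A direct retraction $H \to G$ is impossible since it would force $\chi(H) \leq \chi(G)$, so the homotopy equivalence $\Hom(T,H) \simeq \Hom(T,G)$ must arise from a nontrivial sequence of folds connecting auxiliary graphs. The essential cleverness lies in designing the padding so that the $K$-component, together with the excess vertices, collapses away under iterated folds without disturbing the subgraph $G$. This is where the odd cycle in $G$ is indispensable: it provides the rigid locus along which $K$ is attached and which survives all fold reductions on both sides. I expect the construction to be given by explicit description and the verification of the fold reductions to be a careful but mechanical check, rather than an abstract categorical argument.
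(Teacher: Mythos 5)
There is a fatal obstruction to your strategy. A fold (removing a vertex $v$ with $N(v)\subseteq N(w)$, $w\neq v$) is a graph retraction: for simple graphs, any proper coloring of $A\setminus v$ extends to $A$ by giving $v$ the color of $w$, so $\chi(A)=\chi(A\setminus v)$. Consequently \emph{every} graph connected to $G$ by a zigzag of folds and unfolds has chromatic number exactly $\chi(G)$. Your plan requires $H$ to be reachable from $G$ by such a zigzag while also satisfying $\chi(H)>n$; these two demands are contradictory for $n\geq\chi(G)$. The fold calculus does not "leave the chromatic number unconstrained" --- it preserves it --- so the mismatch you are counting on does not exist, and no amount of clever padding around an amalgam $G\cup_{C_{2r+1}}K$ can make the $K$-part "collapse away under iterated folds" without also forcing $\chi(H)=\chi(G)$. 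A second, independent problem is the choice $K=K_m$: for non-bipartite $T$ (e.g.\ $T=G=C_5$, $K=K_6$) the subgraph $K\subseteq H$ contributes a large non-empty subcomplex $\Hom(T,K)\subseteq\Hom(T,H)$ with no counterpart in $\Hom(T,G)$, and folds cannot remove it.

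The paper escapes both problems by using moves that are \emph{not} $\times$-homotopy equivalences but still preserve the homotopy type of $\Hom(T,-)$ under a girth hypothesis. The high-chromatic gadget is taken, via Erd\H{o}s's theorem, to be a graph $X$ with $\chi(X)>n$ \emph{and} girth larger than both the diameter and the odd girth of $T$: large odd girth makes $\Hom(T,X)=\emptyset$ for non-bipartite $T$, and large girth validates the edge-subdivision lemma (Propositions \ref{Proposition 3.2} and \ref{Proposition 4.5}), which replaces each edge of $X$ by a path of odd length $2k+1$ to produce $Y$ with $\Hom(T,Y)\simeq\Hom(T,X)$ and a homomorphism $Y\to C_{2k+1}\to G$. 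Then $H$ is a graph-theoretic double mapping cylinder $X\cup_f(Y\times I_m)\cup_g G$, and the equivalence $\Hom(T,G)\simeq\Hom(T,H)$ comes from a covering/homotopy-pushout argument (Theorem \ref{Theorem 2.7} and Lemmas \ref{Lemma 5.2}--\ref{Lemma 5.4}), not from folds. If you want to salvage your outline, you must replace the fold zigzag by subdivision moves plus a gluing argument of this kind, and replace $K_m$ by a large-girth Erd\H{o}s graph.
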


When $T$ is a $\Z_2$-graph and the involution flips some edge of $T$, we can choose $H$ so that $\Hom(T,G)$ and $\Hom(T,H)$ are $\Z_2$-homotopy equivalent. This theorem is deduced from Theorem \ref{Theorem 5.1} and Corollary \ref{Corollary 5.5}.

Theorem \ref{Theorem 1.1} implies that there is no homotopy invariant of $\Hom(T,G)$ which gives an upper bound for the chromatic number of $G$. In particular, the homotopy type of $\Hom(T,G)$ does not determine $\chi(G)$ although $T$ is a homotopy test graph. In particular, consider the case $T = K_2$. For a $\Z_2$-space $X$, we write ${\rm ind}_{\Z_2}(X)$ to indicate the smallest integer $k$ such that there is a $\Z_2$-map from $X$ to $S^k$. Since $\Hom(K_2,K_n)$ is $\Z_2$-homeomorphic to $S^{n-2}$ (see \cite{BK1}), we have the inequality
\begin{eqnarray} \label{eqn 1.1}
\chi(G) \geq {\rm ind}_{\Z_2}(\Hom( K_2,G)) + 2
\end{eqnarray}
for every graph $G$. It is known that the right of the inequality (\ref{eqn 1.1}) is the largest lower bound obtained from the $\Z_2$-homotopy type of $\Hom(K_2,G)$ (see Theorem 1.6 and Theorem 1.7 of \cite{DS}). By Theorem \ref{Theorem 1.1} we cannot obtain more information than the inequality (\ref{eqn 1.1}) from the $\Z_2$-homotopy type of $\Hom(K_2,G)$.

To explain the outline of the proof, we recall the following theorem. Here we denote by $g(G)$ the girth of $G$.

\begin{thm}[Erd\H{o}s \cite{Erdos}] \label{Theorem 1.2}
For a pair of positive integers $m$ and $n$, there is a finite graph $G$ such that $\chi(G) > m$ and $g(G) > n$.
\end{thm}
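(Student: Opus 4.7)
The plan is to use the probabilistic method, following Erd\H{o}s's original argument. Fix a small parameter $\theta \in (0, 1/n)$, let $N$ be large, and consider the Erd\H{o}s--R\'enyi random graph $G \sim G(N,p)$ on $N$ labeled vertices with edge probability $p = N^{\theta-1}$. The idea is to show that with positive probability $G$ simultaneously has few short cycles and small independence number; after deleting one vertex from each short cycle, what remains will have girth exceeding $n$ and large chromatic number via the inequality $\chi \geq |V|/\alpha$.

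First, I would bound the expected number $X$ of cycles of length at most $n$. Counting potential $k$-cycles and weighting by $p^k$ gives
\[
\mathbb{E}[X] \leq \sum_{k=3}^{n} N^k p^k = \sum_{k=3}^{n} N^{k\theta} = o(N),
\]
since $k\theta \leq n\theta < 1$. Markov's inequality then yields $\mathbb{P}[X \geq N/2] \to 0$ as $N \to \infty$.

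Second, I would control the independence number. Setting $x = \lceil (3/p)\ln N \rceil$, the expected number of independent vertex sets of size $x$ satisfies
\[
\binom{N}{x}(1-p)^{\binom{x}{2}} \leq \exp\!\left(x \ln N - \tfrac{p}{2} x(x-1)\right),
\]
and the choice of $x$ forces this quantity to $0$. Hence, with high probability, $\alpha(G) < (3/p)\ln N = 3 N^{1-\theta}\ln N$.

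Combining the two events, I fix a realization of $G$ for which both estimates hold and delete one vertex from each cycle of length at most $n$ to obtain a subgraph $G'$. Then $|V(G')| \geq N/2$, the girth of $G'$ exceeds $n$, and $\alpha(G') \leq \alpha(G)$ since removing vertices cannot create new independent sets. Therefore
\[
\chi(G') \geq \frac{|V(G')|}{\alpha(G')} \geq \frac{N/2}{3 N^{1-\theta}\ln N} = \frac{N^\theta}{6 \ln N},
\]
which exceeds $m$ once $N$ is taken large enough. The main delicate point is balancing the parameter $\theta$: it must be small enough that short cycles are rare (forcing $\theta < 1/n$), while the density $p = N^{\theta-1}$ must remain large enough to keep the independence number of order $N^{1-\theta}\ln N$. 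Both requirements overlap throughout the interval $\theta \in (0,1/n)$, so the argument succeeds uniformly in $m$ and $n$.
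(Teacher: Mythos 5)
Your argument is correct and is precisely the classical probabilistic-method proof of Erd\H{o}s's theorem (delete a vertex from each short cycle of $G(N,N^{\theta-1})$, then use $\chi \geq |V|/\alpha$), which is the content of the reference \cite{Erdos} that the paper cites without reproducing a proof. All the estimates check out: the cycle count is $o(N)$ because $n\theta<1$, the independence bound follows from the first-moment computation with $x=\lceil (3/p)\ln N\rceil$, and the final quantity $N^{\theta}/(6\ln N)$ indeed exceeds any fixed $m$ for $N$ large.
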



We now explain the outline of the proof of Theorem \ref{Theorem 1.1}. By Theorem \ref{Theorem 1.2}, there is a graph $X$ such that both of the girth and the chromatic number of $X$ are quite large. By the results of Section 3 and Section 4, we have the following: There is a graph $Y$, and graph homomorphisms $f:Y \rightarrow X$ and $g: Y \rightarrow G$ such that $f$ induces a homotopy equivalence $f_* : \Hom(T,Y) \rightarrow \Hom(T,X)$. Next we consider the ``cylinder" $Y \times I_k$ of $Y$ (see Section 2.3) for a sufficiently large integer $k$. Let $H$ be the graph obtained by attaching two ends of $Y \times I_k$ by the graph homomorphisms $f: Y \rightarrow X$ and $g:Y \rightarrow G$.

The graph $H$ has desired properties. In fact, since $X$ is a subgraph of $H$, we have $\chi(H) \geq \chi(X) > n$. The reader who is familiar with algebraic topology may notice that this construction is similar to the homotopy pushouts of spaces. In fact it turns out that $\Hom(T,H)$ is the homotopy pushout of $f_* : \Hom(T,Y) \rightarrow \Hom(T,X)$ and $g_* : \Hom(T,Y) \rightarrow \Hom(T,G)$. Since $f_*$ is a homotopy equivalence, $\Hom(T,H)$ is homotopy equivalent to $\Hom(T,G)$. This is the outline of the proof.

\vspace{2mm} \noindent {\bf Acknowledgement.} The author wishes to express his gratitude to Toshitake Kohno for his encouragement and helpful comments. The author thanks Dmitry N. Kozlov, Shouta Tounai, and the anonymous referees for insightful comments and valuable suggestions. The author is supported by the Grant-in-Aid for Scientific Research (KAKENHI No. ~28-6304) and the Grant-in-Aid for JSPS fellows.

\section{Preliminaries}

In this section we review relevant facts and introduce the terminology. For a concrete introduction to this subject, we refer to Kozlov's book \cite{Kozlov book}.

Let $P$ be a poset. The {\it order complex $\Delta(P)$ of $P$} is the abstract simplicial complex whose vertex set is the underlying set of $P$ and whose simplices are finite chains of $P$. The {\it classifying space of $P$} is the geometric realization of $\Delta(P)$, and is denoted by $|P|$. We often identify the poset $P$ with the space $|P|$, and assign topological terminology to posets by the classifying space functor. For example, an order-preserving map $f:P \rightarrow Q$ is a homotopy equivalence if the continuous map $|f| : |P| \rightarrow |Q|$ induced by $f$ is a homotopy equivalence.

\subsection{Graphs}

A {\it graph} is a pair $G = (V(G),E(G))$ consisting of a set $V(G)$ together with a symmetric subset $E(G)$ of $V(G) \times V(G)$. Hence our graphs are undirected, may have loops, but have no multiple edges. The reason why we admit looped vertices will be found in Section 2.3 (see the definition of $I_n$). For a pair of vertices $v$ and $w$, we write $v \sim w$ if $(v,w)$ belongs to $E(G)$. A graph is {\it simple} if it has no looped vertices. For a pair of vertices $x$ and $w$ of $G$, we write $\langle x, w\rangle$ to indicate the subset $\{ (x,w), (w,x)\}$ of $V(G) \times V(G)$. The {\it neighborhood $N(v)$ of a vertex $v$} is the set of vertices adjacent to $v$. A {\it graph homomorphism} is a map $f: V(G) \rightarrow V(H)$ with $(f \times f)(E(G)) \subset E(H)$.

For a non-negative integer $n$, let $K_n$ be the complete graph with $n$-vertices, namely, $V(K_n) = \{ 1,\cdots, n\}$ and $E(K_n) = \{ (x,y) \; | \; x \neq y\}$. In terms of these notions, an $n$-coloring of $G$ is identified with a graph homomorphism from $G$ to $K_n$. The {\it chromatic number of $G$} is the number
$$\chi(G) = \inf \{ n \geq 0 \; | \; \textrm{There is an $n$-coloring of $G$.}\}.$$
Here we consider the infimum of the empty set is $+ \infty$.

The {\it girth of a graph $G$} is the minimal length of cycles embedded into $G$.

\subsection{Hom complex}

A {\it multi-homomorphism from $G$ to $H$} is a map $\eta : V(G) \rightarrow 2^{V(H)} \setminus \{ \emptyset \}$ such that $(v,w) \in E(G)$ implies $\eta(v) \times \eta(w) \subset E(H)$. We write $\eta \leq \eta'$ if $\eta(v) \subset \eta'(v)$ for all $v \in V(G)$. The {\it Hom complex $\Hom(G,H)$} is the poset of multi-homomorphisms from $G$ to $H$ with the above ordering. Note that a graph homomorphism $f: G \rightarrow H$ is identified with a minimal point of $\Hom(G,H)$.

We compare the definition of Hom complexes with others. Our definition of the Hom complex is due to Dochtermann \cite{Dochtermann}. In \cite{BK1}, Babson and Kozlov give the definition of the Hom complex as a certain subcomplex of products of simplices in case $T$ and $G$ are finite. Then our Hom complex is isomorphic to the face poset of their Hom complex. Thus the topological types of the two definitions coincide.

For a graph homomorphism $f: G_1 \rightarrow G_2$, let $f^* : \Hom(G_2, H) \rightarrow \Hom(G_1,H)$ be the order-preserving map defined by $f^* (\eta) = \eta \circ f$. On the other hand, for a graph homomorphism $g: H_1 \rightarrow H_2$, let $g_* : \Hom(G,H_1) \rightarrow \Hom(G,H_2)$ be the order-preserving map defined by $g_*(\eta)(x) = g(\eta(x))$.

An {\it involution of a graph $T$} is a graph homomorphism $\alpha : T \rightarrow T$ with $\alpha^2 = {\rm id}_T$. An involution is {\it flipping} if there is a vertex $x$ of $T$ such that $(x,\alpha(x)) \in E(T)$. A $\Z_2$-graph $T$ is {\it flipping} if the involution is flipping. It is easy to see that if $T$ is a flipping $\Z_2$-graph and $G$ is simple, then $\Hom(T,G)$ is a free $\Z_2$-space \cite{BK1}. Moreover, the order-preserving map $f_* :\Hom(T,G_1) \rightarrow \Hom(T,G_2)$ induced by a graph homomorphism $f$ is $\Z_2$-equivariant.

\subsection{$\times$-homotopy theory} Here we review the $\times$-homotopy theory established by Dochtermann \cite{Dochtermann} as far as we need.

Let $G$ and $H$ be graphs. Graph homomorphisms $f$ and $g$ from $G$ to $H$ are {\it $\times$-homotopic} if they belong to the same connected component of $\Hom(G,H)$. We write $f \simeq_\times g$ to mean that $f$ and $g$ are $\times$-homotopic. A graph homomorphism $f:G \rightarrow H$ is a {\it $\times$-homotopy equivalence} if there is a graph homomorphism $g: H \rightarrow G$ such that $gf \simeq_\times {\rm id}_G$ and $fg \simeq_\times {\rm id}_H$. A {\it $\times$-homotopy equivalence} is a graph homomorphism $f: G\rightarrow H$ such that there is a graph homomorphism $g:H \rightarrow G$ with $fg \simeq_\times {\rm id}_H$ and $gf \simeq_\times {\rm id}_G$.

Let $a$ and $b$ be integers with $a \leq b$. Let $I_{[a,b]}$ be the graph defined by $V(I_{[a,b]}) = \{ x \in \mathbb{Z} \; | \; a \leq x \leq b\}$ and $E(I_{[a,b]}) = \{ (x,y) \; | \; |x-y| \leq 1\}$. We write $I_n$ instead of $I_{[0,n]}$. For a pair of graph homomorphisms $f,g:G \rightarrow H$, a {\it $\times$-homotopy} from $f$ to $g$ is a graph homomorphism $F: G \times I_n \rightarrow H$ such that $F(x,0) = f(x)$ and $F(x,n) = g(x)$ for all $x \in V(G)$. Here the notation ``$\times$" means the categorical product. Then one can show that $f$ and $g$ are $\times$-homotopic if and only if there is a $\times$-homotopy from $f$ to $g$ (Proposition 4.7 of \cite{Dochtermann}). By this characterization, one can easily show that the inclusion $\iota_c: T \hookrightarrow T \times I_{[a,b]}$, $x \mapsto (x,c)$ for $a \leq c \leq b$ is a $\times$-homotopy equivalence, whose $\times$-homotopy inverse is the projection $p: X \times I_{[a,b]} \rightarrow X$.

Let $T$, $G$, and $H$ be graphs, and let $f$ and $g$ be graph homomorphisms from $G$ to $H$. If $f$ and $g$ are $\times$-homotopic, then the maps $f_*, g_* : \Hom (T,G) \rightarrow \Hom(T,H)$ are homotopic (Theorem 5.1 of Dochtermann \cite{Dochtermann}. However, in his proof, he used Proposition \ref{Proposition 2.3} mentioned below. A direct and simplified proof without using Proposition \ref{Proposition 2.3} was given in Section 5 in \cite{Matsushita 2}). Thus if $f$ is a $\times$-homotopy equivalence, then $f_*$ is a homotopy equivalence.

A vertex $v \in V(G)$ is {\it dismantlable} if there is another vertex $w$ such that $N(v) \subset N(w)$. We write $G \setminus v$ to indicate the induced subgraph of $G$ whose vertex set is $V(G) \setminus \{ v\}$. If $v$ is dismantlable, then the inclusion $G \setminus v \hookrightarrow G$ is a $\times$-homotopy equivalence. In particular, the following lemma holds:

\begin{prop}[Kozlov \cite{Kozlov}] \label{Proposition 2.3}
If $v$ is dismantlable, then the inclusion $\Hom(T,G\setminus v) \hookrightarrow \Hom(T,G)$ is a homotopy equivalence.
\end{prop}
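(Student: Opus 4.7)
The plan is to deduce Proposition~\ref{Proposition 2.3} from the two assertions already flagged in Section 2.3, namely that a dismantling inclusion $G \setminus v \hookrightarrow G$ is a $\times$-homotopy equivalence, and that a $\times$-homotopy equivalence induces a homotopy equivalence on Hom complexes (Dochtermann's Theorem 5.1). The only content that needs verification is the first assertion; the second is quoted from the literature.

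So the main step is to produce an explicit graph homomorphism $r : G \to G \setminus v$ with $r \circ i = \mathrm{id}_{G \setminus v}$ and $i \circ r \simeq_{\times} \mathrm{id}_G$, where $i$ denotes the inclusion and $w$ is a vertex with $N(v) \subset N(w)$. I would define $r$ by $r(v) = w$ and $r(x) = x$ for $x \neq v$. To check that $r$ is a graph homomorphism, the only nontrivial edges to inspect are those of the form $(v,x)$ with $x \neq v$ and (in case $v$ is looped) the loop at $v$. If $v \sim x$ then $x \in N(v) \subset N(w)$, so $w \sim x$, i.e.\ $r(v) \sim r(x)$; and if $v \sim v$ then $v \in N(v) \subset N(w)$ forces $w \sim v$, and then $w \in N(v) \subset N(w)$ forces $w \sim w$, so the loop is respected.

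For the $\times$-homotopy, I would build a map $H : G \times I_1 \to H$... $H:G\times I_1\to G$ by $H(x,0) = x$ and $H(x,1) = r(x)$. To see that $H$ is a graph homomorphism, I would verify the edge condition case-by-case: if $(x,s) \sim (y,t)$ in $G \times I_1$, then $x \sim y$ in $G$ and $|s-t|\le 1$; the cases $s=t=0$ and $s=t=1$ are immediate from $r$ being a homomorphism, while the mixed case reduces (as above) to the observation that $y = v$ and $x \sim v$ force $x \in N(v) \subset N(w)$, so $x \sim w = r(v)$. Therefore $H$ is a $\times$-homotopy from $\mathrm{id}_G$ to $i\circ r$, and since $r\circ i = \mathrm{id}_{G \setminus v}$ is equal (hence $\times$-homotopic) to the identity, $i$ is a $\times$-homotopy equivalence.

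Applying Dochtermann's result, $i_* : \Hom(T, G\setminus v) \to \Hom(T,G)$ is then a homotopy equivalence, which is the claim of the proposition. There is no serious obstacle here: the definition of dismantlable is tailor-made so that the folding map $r$ is a graph homomorphism, and the homotopy $H$ on $G \times I_1$ is the only natural candidate. The mild care point is handling looped vertices in the definition of $r$, which the inclusion $N(v)\subset N(w)$ handles automatically.
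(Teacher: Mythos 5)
Your argument is correct and is exactly the route the paper itself sketches in Section 2.3: the fold $r(v)=w$, $r(x)=x$ exhibits $G\setminus v \hookrightarrow G$ as a $\times$-homotopy equivalence, and Dochtermann's theorem that $\times$-homotopy equivalences induce homotopy equivalences of Hom complexes then yields the claim (the paper cites Kozlov for a direct proof but presents this same deduction). The one caveat, which the paper explicitly flags, is that Dochtermann's original proof of that theorem itself uses Proposition~\ref{Proposition 2.3}, so to avoid circularity you must appeal to the independent proof of it given in \cite{Matsushita 2}.
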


\subsection{Some theorems in algebraic topology}

We need the following theorems.

\begin{thm} \label{Theorem 2.7}
Let $X$ and $Y$ be CW-complexes, and $A$ a set. Let $(X_{\alpha})_{\alpha \in A}$ (or $(Y_\alpha)_{\alpha \in A}$) be an $A$-indexed family of subcomplexes of $X$ (or $Y$) which is a covering of $X$ (or $Y$, respectively). Let $f : X \rightarrow Y$ be a continuous map such that $f(X_\alpha) \subset Y_\alpha$ for every $\alpha \in A$. Suppose that for each finite subset $\{ \alpha_1, \cdots, \alpha_r\}$ of $A$, the map
$$f|_{X_{\alpha_1} \cap \cdots \cap X_{\alpha_r}} : X_{\alpha_1} \cap \cdots \cap X_{\alpha_r} \rightarrow Y_{\alpha_1} \cap \cdots \cap Y_{\alpha_r}$$
is a homotopy equivalence. Then the map $f$ is a homotopy equivalence.
\end{thm}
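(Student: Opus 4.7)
The plan is to show that $f$ is a weak homotopy equivalence and then invoke Whitehead's theorem, using that both $X$ and $Y$ are CW-complexes. I would split the argument into two stages: first treat the case where $A$ is finite by induction on $|A|$, and then deduce the general case by a compactness argument on spheres.

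For the finite case, induct on $|A|$; the case $|A|=1$ is the hypothesis. For the inductive step, fix $\alpha_0 \in A$, set $A' = A \setminus \{\alpha_0\}$, $X' = \bigcup_{\alpha \in A'} X_\alpha$, and analogously $Y'$. Then $X$ is the pushout of
$$X_{\alpha_0} \longleftarrow X_{\alpha_0} \cap X' \longrightarrow X',$$
and similarly for $Y$. Since arbitrary unions and intersections of subcomplexes of a CW-complex are again subcomplexes, every inclusion appearing here is a cofibration. The inductive hypothesis applied to the covering $(X_\alpha)_{\alpha \in A'}$ of $X'$ shows that $X' \to Y'$ is a homotopy equivalence; applied to the covering $(X_{\alpha_0} \cap X_\alpha)_{\alpha \in A'}$ of $X_{\alpha_0} \cap X'$ (whose finite intersections are exactly the $(r+1)$-fold intersections already controlled by the hypothesis) it shows that $X_{\alpha_0} \cap X' \to Y_{\alpha_0} \cap Y'$ is a homotopy equivalence; and $X_{\alpha_0} \to Y_{\alpha_0}$ is a homotopy equivalence by the hypothesis with $r=1$. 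The standard gluing lemma for pushouts along cofibrations then gives a homotopy equivalence $X \to Y$.

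For the infinite case, I would verify that $f_* : \pi_n(X,x_0) \to \pi_n(Y,f(x_0))$ is bijective for every $n$. Given any continuous map from a compact space (a sphere, or a sphere times $I$) into $Y$, its image meets only finitely many cells of $Y$; covering the closure of each such cell by finitely many $Y_\alpha$ (using compactness of the closed cell), we obtain a finite subset $\{\alpha_1,\ldots,\alpha_N\} \subset A$ with the image contained in the subcomplex $Y'' = Y_{\alpha_1} \cup \cdots \cup Y_{\alpha_N}$. Applying the finite case to $X'' = X_{\alpha_1} \cup \cdots \cup X_{\alpha_N} \to Y''$ yields the desired lift or null-homotopy. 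After possibly enlarging the finite set to also absorb the images of the two boundary spheres of the null-homotopy, both surjectivity and injectivity follow.

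The main obstacle is the inductive step in the finite case: one must verify that all the intermediate objects $X' , X_{\alpha_0} \cap X'$, and so on are genuine CW-subcomplexes and that the inclusions are honest cofibrations, so that the gluing lemma for pushouts is applicable. Once that point-set infrastructure is in place the induction and the compactness reduction proceed essentially formally.
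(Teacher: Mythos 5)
Your proposal is essentially correct and is a complete, self-contained argument, whereas the paper simply cites the statement as well known (Theorem 2.4 of \cite{Matsushita 2}); the standard proofs in the literature (e.g.\ \cite{WZZ}) run through homotopy colimits: one compares the homotopy colimits of the two diagrams of finite intersections indexed by the nerve of the cover and uses the projection lemma for closed cofibration covers. Your route --- induction on the size of a finite cover via the gluing lemma for pushouts along cofibrations, followed by a compactness reduction and Whitehead's theorem --- is more elementary and avoids the homotopy colimit machinery entirely. The induction is sound: $X_{\alpha_0}\cap X'=\bigcup_{\alpha\in A'}(X_{\alpha_0}\cap X_\alpha)$ and the finite intersections of this auxiliary cover are again finite intersections of the original family, so the inductive hypothesis genuinely applies to all three comparison maps, and all inclusions of subcomplexes are cofibrations, so the gluing lemma applies.

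Two small points to tighten. First, in the compactness step you say you cover the closure of each cell by finitely many $Y_\alpha$ ``using compactness of the closed cell''; this does not literally work, since the $Y_\alpha$ are closed, not open, and a cover by closed sets admits no finite subcover in general. The correct (and simpler) observation is that if a subcomplex $Y_\alpha$ contains one point of an open cell, it contains the entire closed cell; hence each closed cell meeting the compact image lies in a single $Y_\alpha$, and finitely many indices suffice because a compact subset of a CW-complex meets only finitely many cells. Second, to deduce a homotopy equivalence from Whitehead's theorem you must check bijectivity of $f_*$ on $\pi_0$ and on $\pi_n$ at every basepoint, and in the injectivity step you must enlarge the finite index set so that it also absorbs the (compact) image in $X$ of the sphere you wish to null-homotope and the relevant basepoint; you indicate this but it deserves an explicit sentence. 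With these adjustments the proof is complete.
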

\begin{proof}
This is well-known. See Theorem 2.4 of \cite{Matsushita 2}, for example.
\end{proof}

\begin{thm}[Theorem 2.4 of \cite{GM}] \label{Bredon}
Let $\Gamma$ be a finite group, and $f:X \rightarrow Y$ a $\Gamma$-map between free $\Gamma$-CW-complexes. Then $f$ is a $\Gamma$-homotopy equivalence if and only if $f$ is a homotopy equivalence.
\end{thm}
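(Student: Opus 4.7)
The forward implication is immediate: forgetting the $\Gamma$-action turns a $\Gamma$-homotopy equivalence into an ordinary homotopy equivalence. The content is the converse, and I would prove it by invoking the equivariant Whitehead theorem. That theorem asserts that a $\Gamma$-map $f : X \to Y$ between $\Gamma$-CW-complexes is a $\Gamma$-homotopy equivalence if and only if, for every subgroup $H \le \Gamma$, the induced map $f^H : X^H \to Y^H$ on $H$-fixed-point sets is a weak homotopy equivalence. Since the $\Gamma$-actions on $X$ and $Y$ are free, $X^H$ and $Y^H$ are empty for every nontrivial $H$, so this infinite list of conditions collapses to the single requirement that $f = f^{\{e\}}$ be a weak equivalence. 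By hypothesis $f$ is even a genuine homotopy equivalence, so the Whitehead criterion is satisfied and the proof is complete.

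An alternative, more hands-on, route avoids the equivariant Whitehead package and instead descends to quotients. Because $\Gamma$ is finite and acts freely, the quotient maps $\pi : X \to X/\Gamma$ and $\pi' : Y \to Y/\Gamma$ are principal $\Gamma$-bundles (in particular, coverings), and $f$ descends to a cellular map $\bar{f} : X/\Gamma \to Y/\Gamma$. The plan is in two steps: first, show that $\bar{f}$ is a homotopy equivalence (for higher homotopy groups this is automatic since covering maps are $\pi_n$-isomorphisms for $n \ge 2$; for $\pi_0$ and $\pi_1$ one compares the long exact sequences of the two coverings and uses that $f$ itself is a homotopy equivalence); second, choose a homotopy inverse $\bar{g} : Y/\Gamma \to X/\Gamma$ and the associated homotopies, and lift them cell-by-cell over $Y$ (respectively $Y \times I$) to $\Gamma$-equivariant maps using the covering homotopy property of the principal $\Gamma$-bundle $X \to X/\Gamma$.

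In either approach the essential obstacle is packaging the equivariance. In the Whitehead approach it is absorbed into the inductive construction of equivariant cellular extensions, where freeness of the action ensures that each orbit of cells contributes an ordinary (non-equivariant) obstruction problem; in the quotient approach it is handled by the standard lifting theory for principal bundles with discrete structure group. In both cases the hypothesis that the actions are \emph{free} is doing all of the real work: it simultaneously kills the fixed-point conditions of the equivariant Whitehead theorem and guarantees that the quotients are well-behaved CW-complexes with $X \to X/\Gamma$ a covering projection.
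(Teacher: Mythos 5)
The paper does not actually prove this statement: it is quoted as Theorem 2.4 of Greenlees--May and used as a black box, so there is no internal proof to compare against. Your first argument is precisely the standard derivation (and essentially the one in the cited source): the equivariant Whitehead theorem for $\Gamma$-CW-complexes reduces the question to the fixed-point maps $f^H$ for all subgroups $H$; freeness forces $X^H = Y^H = \emptyset$ for every $H \neq \{e\}$, and $\emptyset \to \emptyset$ is vacuously a weak equivalence, so the only surviving condition is that $f = f^{\{e\}}$ be a weak equivalence, which holds since $f$ is assumed to be an honest homotopy equivalence. That is complete and correct modulo the equivariant Whitehead theorem itself. Your alternative covering-space route is also viable but is sketchier at two points: producing an \emph{equivariant} lift $g$ of a homotopy inverse $\bar{g}$ is not just an application of the covering homotopy property --- one needs the lifting criterion on $\pi_1$, or equivalently the fact that $\bar{g}^{\ast}$ of the principal $\Gamma$-bundle $X \rightarrow X/\Gamma$ is isomorphic to $Y \rightarrow Y/\Gamma$, which follows from $\bar{f}\bar{g} \simeq \mathrm{id}$, the relation $X \cong \bar{f}^{\ast}Y$ coming from equivariance of $f$, and homotopy invariance of principal bundles; and when $X$ is disconnected the $\pi_0$ and $\pi_1$ bookkeeping must be carried out component by component. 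Since the first argument already suffices, present only that one.
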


\section{Deformations of box complexes}

In this section we shall show that a certain subdivision of a graph does not change the homotopy type of $\Hom(K_2,G)$. We write $B(G)$ instead of $\Hom(K_2,G)$ for short, and call it the {\it box complex of $G$} \cite{MZ}.

For a positive integer $n$, define the graph $L_n$ by $V(L_n) = \{ 0,1,\cdots, n\}$ and $E(L_n) = \{ (a,b) \; | \; |a-b| =1\}$.

\vspace{1mm}
Let $G$ be a graph and $e = \langle v,w \rangle$ an edge of $G$. We define the graph $G_e$ as follows. The vertex set of $G_e$ is $V(G) \coprod \{ 0,1\}$ and the edge set is defined by
$$E(G_e) = (E(G) \setminus e) \cup \langle 0,1\rangle \cup \langle 0,v\rangle \cup \langle 1,w \rangle.$$
Figure 1 illustrates the graph $G_e$. We have a natural homomorphism $r_e :G_e \rightarrow G$ defined by the correspondence
$$r_e(x) = \begin{cases}
x & (x \in V(G))\\
w & (x = 0)\\
v & (x = 1).
\end{cases}$$

\begin{center}
\begin{picture}(295,100)(0,0)
\put(30,40){\circle*{4}} \put(30,40){\line(-1,2) {10}} \put(30,40){\line(-2,1){20}} \put(30,40){\line(-2,-1){20}} \put(30,40){\line(0,-1){20}}

\put(90,60){\circle*{4}} \put(30,40){\line(3,1){60}}

\put(90,60){\line(1,3){7}} \put(90,60){\line(3,1){20}} \put(90,60){\line(2,-1){20}} \put(90,60){\line(1,-3){7}}

\put(200,40){\circle*{4}} \put(200,40){\line(-1,2) {10}} \put(200,40){\line(-2,1){20}} \put(200,40){\line(-2,-1){20}} \put(200,40){\line(0,-1){20}}

\put(200,40){\line(2,1){20}} \put(220,50){\line(1,0){25}} \put(245,50){\line(2,1){20}}

\put(265,60){\line(1,3){7}} \put(265,60){\line(3,1){20}} \put(265,60){\line(2,-1){20}} \put(265,60){\line(1,-3){7}}

\put(220,50){\circle*{4}} \put(245,50){\circle*{4}} \put(265,60){\circle*{4}}

\put(130,50){\vector(1,0){30}}

\put(30,45){$v$} \put(82,65){$w$} \put(61,42){$e$}

\put(200,45){$v$} \put(257,65){$w$} \put(218,38){$0$} \put(243,38){$1$}

\put(58,10){$G$} \put(230,10){$G_e$}
\end{picture}

{\bf Figure 1.}
\end{center}

In general $r_e:G_e \rightarrow G$ does not induce a homotopy equivalence between their box complexes. For example, consider the cycles $C_6$ and $C_4$. On the other hand, the following proposition holds.

\begin{prop} \label{Proposition 3.2}
Let $G$ be a graph, and $e = \langle v, w \rangle$ an edge of $G$. If there is no graph homomorphism from $L_3 \rightarrow G \setminus e$ which takes $0$ to $v$ and $3$ to $w$, then the map $r_e: G_e \rightarrow G$ induces a homotopy equivalence $r_{e*} : B(G_e) \rightarrow B(G)$. Moreover, $B(G)$ is obtained from $B(G_e)$ by collapsing two intervals to two points, respectively. 
\end{prop}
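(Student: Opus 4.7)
The plan is to realize $r_{e*}\colon B(G_e) \to B(G)$ as the quotient map that collapses two disjoint contractible subposets of $B(G_e)$ to two points. Let $\varepsilon_1 = (\{v\}, \{w\})$ and $\varepsilon_2 = (\{w\}, \{v\})$ denote the two atoms of $B(G)$ arising from the removed edge $e$, and set $J_i = r_{e*}^{-1}(\varepsilon_i)$. Since $N_{G_e}(0) = \{v, 1\}$ and $N_{G_e}(1) = \{w, 0\}$, a direct enumeration of admissible pairs $(A', B')$ with $A' \subset \{v, 1\}$, $B' \subset \{w, 0\}$ and $A' \times B' \subset E(G_e)$ shows that $J_1$ is the five-element zigzag
\[
(\{v\}, \{0\}) < (\{v, 1\}, \{0\}) > (\{1\}, \{0\}) < (\{1\}, \{w, 0\}) > (\{1\}, \{w\}),
\]
whose order complex is a segment, and hence contractible; the analysis of $J_2$ is symmetric.

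For every $\eta = (A, B) \in B(G) \setminus \{\varepsilon_1, \varepsilon_2\}$ I would then exhibit a unique lift $\tilde\eta \in B(G_e)$ with $r_{e*}(\tilde\eta) = \eta$. When neither $(v \in A,\, w \in B)$ nor $(w \in A,\, v \in B)$ holds, $\eta$ already lies in $B(G_e)$ and is its own lift; in the remaining borderline cases one substitutes the subdivision vertex $0$ for $w$ or $1$ for $v$ as appropriate, e.g.\ $\eta = (\{v\}, \{w\} \cup B_0)$ with $B_0 \neq \emptyset$ lifts to $\tilde\eta = (\{v\}, \{0\} \cup B_0)$, and symmetric substitutions cover the other borderline configurations.

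The hypothesis on $L_3$ enters crucially here: the case $v \in A,\, w \in B$ with $A \setminus \{v\} \neq \emptyset$ and $B \setminus \{w\} \neq \emptyset$ cannot occur, since any $a \in A \setminus \{v\}$ and $b \in B \setminus \{w\}$ produce a walk $v - b - a - w$ in $G$ whose three edges all differ from $e$, giving a graph homomorphism $L_3 \to G \setminus e$ sending $0$ to $v$ and $3$ to $w$ --- a contradiction. A symmetric argument rules out $(w \in A,\, v \in B)$ with both sides strict. Once this is established, $\eta \mapsto \tilde\eta$ is order-preserving and inverse to $r_{e*}$ on $B(G_e) \setminus (J_1 \cup J_2)$, so $r_{e*}$ restricts to a poset isomorphism $B(G_e) \setminus (J_1 \cup J_2) \xrightarrow{\cong} B(G) \setminus \{\varepsilon_1, \varepsilon_2\}$.

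Passing to classifying spaces, $|r_{e*}|$ is thus precisely the quotient map identifying each contractible subcomplex $|J_i|$ to the single point $\varepsilon_i$, which establishes the moreover clause; and collapsing a contractible CW-subcomplex is a homotopy equivalence, so $r_{e*}$ is a homotopy equivalence. The main obstacle is the patient case analysis constructing $\tilde\eta$ in the borderline situations --- one must handle each combination of whether $v$ and $w$ appear in $A$ or $B$ (including, if $G$ has loops, $v \in A \cap B$ or $w \in A \cap B$) --- but once the $L_3$-hypothesis has excluded the critical case, the adjacency constraints in $G_e$ force the lift to be unique in every remaining case.
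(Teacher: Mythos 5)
Your combinatorial analysis coincides with the paper's: you compute the fibres of $r_{e*}$, use the $L_3$-hypothesis to rule out the configuration where both coordinates properly contain $\{v\}$ and $\{w\}$, identify the two exceptional fibres as the same five-element zigzags, and show every other element of $B(G)$ has a unique lift. (Your exclusion argument has the same harmless blind spot as the paper's when $v,w$ are both looped and $a=w$, $b=v$, so I do not count that against you.)

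The one genuine gap is the sentence ``Passing to classifying spaces, $|r_{e*}|$ is thus precisely the quotient map.'' This does not follow formally from the poset-level facts you have established. Knowing that $r_{e*}$ is a poset isomorphism off $J_1\cup J_2$ and collapses each $J_i$ to a point does not by itself determine the induced map of order complexes: a chain of $B(G)$ through $\varepsilon_i=(\{v\},\{w\})$ could a priori have several lifts, differing in which element of $J_i$ they contain, and if some lift $\tilde\eta$ of an element $\eta>\varepsilon_i$ sat above two incomparable elements of $J_i$, the induced map $|B(G_e)|/(|J_1|,|J_2|)\to|B(G)|$ would fail to be injective on the corresponding $1$-simplices (it could still be a homotopy equivalence, but not the asserted homeomorphism). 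What must be checked --- and what the paper spends the bulk of its proof on, via the explicit chain-by-chain construction of the inverse $g$ and the commutativity of the diagrams --- is that each lift $\tilde\eta$ with $\eta>\varepsilon_i$ lies above exactly one element of $J_i$ (a minimal one: $(\{1\},\{w\})$ in case (a), $(\{v\},\{0\})$ in case (b)), so that every chain of $B(G)$ containing $\varepsilon_i$ lifts uniquely modulo the collapsed interval. This verification is short given your fibre computation, but it is the content of the ``moreover'' clause and should not be replaced by ``thus.''
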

\begin{proof}
First we describe the inverse image of each point of $B(G)$ with respect to $r_{e*}$. We note that $r_e^{-1}(x) = \{ x\}$ if $x \neq v,w$, and $r_e^{-1}(v) = \{ v,1\}$, $r_e^{-1}(w) = \{ w, 0\}$.

Let $(\sigma,\tau) \in B(G)$ and let $(\sigma' , \tau') \in r_{e*}^{-1}(\sigma, \tau)$. Suppose that $\sigma$ contains neither $v$ nor $w$. Let $x \in \sigma$. Then we have $x \in \sigma'$. Since $0$ and $1$ are not adjacent to $x$ in $G_e$, $\tau'$ contains neither $0$ nor $1$. Thus we have $\tau' = \tau$. Similarly, if $\tau$ contains neither $v$ nor $w$, then we have $r_{e*}^{-1}(\sigma, \tau) = \{ (\sigma,\tau)\}$.

Next we consider the following cases.
\begin{itemize}
\item[(a)] $v \in \sigma$, $w \in \tau$, and there is $x \in \sigma$ such that $x \neq v$.
\item[(b)] $v \in \sigma$, $w \in \tau$, and there is $x \in \tau$ such that $x \neq w$.
\item[(c)] $w \in \sigma$, $v \in \tau$, and there is $x \in \sigma$ such that $x \neq w$.
\item[(d)] $w \in \sigma$, $v \in \tau$, and there is $x \in \tau$ such that $x \neq v$.
\end{itemize}
We first note that in the case (a), we have $\tau = \{ w\}$. In fact, if there is $y \in \tau$ such that $y \neq w$, then we have a graph homomorphism $f: L_3 \rightarrow G$ defined by
$$f(0) = v, f(1) = y, f(2) = x, f(3) = w.$$
This contradicts the hypothesis. Similarly, we have $\sigma = \{ v\}$ in the case (b), $\tau = \{ v\}$ in the case (c), and $\sigma = \{ w \}$ in the case (d). This means that an element of $B(G)$ satisfies at most one of the above four conditions. Moreover, if an element $(\sigma ,\tau)$ of $B(G)$ satisfies the condition ($i$) for some $i \in \{ 1,\cdots, 4\}$, then an element of $B(G)$ smaller than $(\sigma,\tau)$ (for the definition of the partial order of $B(G)$, see Section 2.2) does not satisfy the other conditions described above.

We now turn to the description of the inverse image of $r_{e*}$. Suppose that $(\sigma, \tau)$ satisfies the condition (a). Then we have $\tau' = \tau = \{ w\}$. Since $w \in \tau'$ and $v$ is not adjacent to $w$ in $G_e$, we have that $v \not\in \sigma'$ and $1 \in \sigma'$. Thus we have
$$\sigma' = (\sigma \setminus \{ v\}) \cup \{ 1\}.$$
The cases of (b), (c), and (d) are similarly described.

Finally, we set $I = r_{e*}^{-1}(\{ v\}, \{ w\})$ and $J = r_{e*}^{-1}(\{ w\},\{ v\})$. Clearly, we have
$$I = \{ (\{ v\}, \{ 0\}), (\{ v, 1\}, \{ 0\}), (\{ 1\}, \{ 0\}), (\{ 1\}, \{ 0, w\}), (\{ 1\}, \{ w\}) \}.$$
Figure 2 is the Hasse diagram of $I$. Thus its classifying space $|I|$ is an interval. Similarly, the classifying space $|J|$ of $J$ is an interval.

\begin{center}
\begin{figure}[b]
\begin{picture}(240,75)
\put(0,20){\circle*{3}} \put(60,50){\circle*{3}} \put(120,20){\circle*{3}} \put(180,50){\circle*{3}} \put(240,20){\circle*{3}}

\put(0,20){\line(2,1){60}} \put(60,50){\line(2,-1){60}} \put(120,20){\line(2,1){60}} \put(180,50){\line(2,-1){60}}

\put(-23,7){ $(\{ v\}, \{ 0\})$} \put(30,60){$(\{ v, 1\}, \{ 0\})$} \put(99,7){$(\{ 1\}, \{ 0\})$} \put(150,60){$(\{ 1\}, \{ 0,w\})$} \put(217,7){$(\{ 1\}, \{ w\})$}
\end{picture}

The Hasse diagram of $I$.

\vspace{2mm} {\bf Figure 2.}
\end{figure}
\end{center}

Let $X$ be a topological spaces obtained from $|B(G_e)|$ by collapsing $|I|$ and $|J|$, respectively. Let $q: |B(G_e)| \rightarrow X$ be the quotient map. Since $|I| \cap |J| = \emptyset$ (see the definitions of $I$ and $J$), we have that $q$ is a homotopy equivalence. Clearly, the map $|r_{e*}| : |B(G_e)| \rightarrow |B(G)|$ induced by $r_e$ induces a continuous map $f: X \rightarrow |B(G)|$.

Before constructing the inverse of $f$, we prepare some notation. Let $\R^{(B(G))}$ denote the free $\R$-module generated by $B(G)$, and we consider its topology as the direct limit of the finite dimensional $\R$-submodules. For each element $x \in B(G)$, we write $e_x$ to indicate the associated element of $\R^{(B(G))}$. For each chain $c = \{ (\sigma_0, \tau_0) , \cdots , (\sigma_n, \tau_n) \; | \; (\sigma_i, \tau_i) < (\sigma_j, \tau_j)$ if $i < j\}$ of $B(G)$, we write $\Delta_c$ to indicate the $n$-simplex associated to the chain $c$. Namely, $\Delta_c$ is the convex hull of $e_{(\sigma_0,\tau_0)}, \cdots, e_{(\sigma_n, \tau_n)}$ in $\R^{(B(G))}$. To construct the inverse $g: |B(G)| \rightarrow X$ of $f : X \rightarrow |B(G)|$, we construct $g_c: \Delta_c \rightarrow X$ for each chain $c$ of $B(G)$.

Let $c = \{ (\sigma_0, \tau_0), \cdots, (\sigma_n, \tau_n) \; | \; (\sigma_0,\tau_0) < \cdots < (\sigma_n, \tau_n)\}$ be a chain of $B(G)$. If $\sigma_n$ contains neither $v$ nor $w$, then $c$ is a chain of $B(G_e)$. Thus define $g_c$ by the composition of the sequence
$$\begin{CD}
\Delta_c \subset |B(G_e)| \longrightarrow X.
\end{CD}$$
The case that $\tau$ contains neither $v$ nor $w$ is similar, and these definitions of $g_c$ coincide if both of $\sigma$ and $\tau$ contain neither $v$ nor $w$.

Next suppose that $(\sigma_n, \tau_n)$ satisfies the condition (a) mentioned above. Set
$$\sigma_i' = \begin{cases}
(\sigma_n \setminus \{ v\}) \cup \{ 1\} & (v \in \sigma_i)\\
\sigma_i & (v \not\in \sigma_i)
\end{cases}$$
and set 
$$c' = \{ (\sigma'_0, \tau_0) , \cdots , (\sigma_n' , \tau_n)\}.$$
Then $c'$ is a chain in $B(G_e)$ and define $\iota : \Delta_c \rightarrow \Delta_{c'}$ be the simplicial map which sends $(\sigma_i, \tau_i)$ to $(\sigma_i', \tau_i)$. Define $g_c : \Delta_c \rightarrow X$ by the composition of the sequence
$$\begin{CD}
\Delta_c @>{\iota}>> \Delta_{c'} \subset |B(G)| @>>> X.
\end{CD}$$
The cases (b), (c), and (d) can be similarly proved.

If $c = \{ (\{ v\}, \{ w\})\}$ (or $c = \{ (\{ w \}, \{ v\})\}$), then define $g_c : \Delta_c \rightarrow X$ by sending the point $e_{(\{ v\}, \{ w\})}$ of $\Delta_c$ to the point of $X$ obtained by collapsing $|I|$ (or $|J|$, respectively).

We want to show that the maps $g_c$ define a continuous map $g: |B(G)| \rightarrow X$. Let $c$ and $c'$ be chains of $B(G)$ and suppose $c \subset c'$. Then it suffices to show that
\begin{eqnarray} \label{diagram}
\begin{CD}
\Delta_c @>{g_c}>> X\\
@VVV @|\\
\Delta_{c'} @>{g_{c'}}>> X 
\end{CD}
\end{eqnarray}
is commutative. Have the left vertical arrow is the inclusion. Let $(\sigma,\tau)$ (or $(\sigma' , \tau')$) be the maximal element of $c$ (or $c'$, respectively). If $\sigma'$ contains neither $v$ nor $w$, then neither $\sigma$ and it is clear that the above commutative diagram is commutative. The case $\tau'$ contains neither $v$ nor $w$ is similarly proved.

If $(\sigma',\tau')$ satisfies the condition (a) described as above. Then $(\sigma, \tau)$ satisfies (1), $\sigma$ contains neither $v$ nor $w$, or $(\sigma, \tau) = (\{ v\}, \{ w\})$ (see the remark given in the paragraph after the condition (d)). It is straightforward to see that the diagram (\ref{diagram}) is again commutative. The cases of (b), (c), and (d) are similarly proved.

It is obvious that the diagram (\ref{diagram}) is commutative when $(\sigma', \tau') = (\{ v\}, \{ w\})$ or $(\{ w\}, \{ v\})$, since in this case $c'$ is a minimal element of the poset of chains of $B(G)$. Thus the maps $g_c$ induce a continuous map $g: |B(G)| \rightarrow X$.

By the description of the inverse image of $r_{e*} : B(G_e) \rightarrow B(G)$, it is clear that $g$ is the inverse of $f$. This completes the proof.
\end{proof}

Note that if the girth $g(G)$ of a graph $G$ is greater than 4, then the hypothesis of Proposition \ref{Proposition 3.2} always holds.

\begin{eg}[Walker \cite{Walker}] \label{Example 3.3}
Let $G_1$ and $G_2$ be graphs illustrated in Figure 3. Clearly, they have different chromatic numbers. On the other hand, the box complexes $B(G_1)$ and $B(G_2)$ are $\Z_2$-homotopy equivalent. This fact can be deduced from the direct verification or Proposition \ref{Proposition 3.2}.

\begin{center}
\begin{figure}[h]
\begin{picture}(170,60)
\put(10,10){\circle*{3}} \put(10,30){\circle*{3}} \put(30,30){\circle*{3}} \put(40,50){\circle*{3}}
\put(10,10){\line(1,1){20}}  \put(10,10){\line(0,1){20}} \put(10,30){\line(1,0){20}} \put(10,30){\line(3,2){30}} \put(30,30){\line(1,2){10}}
\put(50,30){\circle*{3}} \put(70,30){\circle*{3}} \put(70,10){\circle*{3}}
\put(10,10){\line(1,0){60}} \put(50,30){\line(1,-1){20}} \put(50,30){\line(1,0){20}} \put(40,50){\line(1,-2){10}} \put(40,50){\line(3,-2){30}}
\put(70,10){\line(0,1){20}}

\put(35,-3){$G_1$}

\put(110,10){\circle*{3}} \put(110,30){\circle*{3}} \put(130,30){\circle*{3}} \put(140,50){\circle*{3}}
\put(110,10){\line(1,1){20}}  \put(110,10){\line(0,1){20}} \put(110,30){\line(1,0){20}} \put(110,30){\line(3,2){30}} \put(130,30){\line(1,2){10}}
\put(150,30){\circle*{3}} \put(170,30){\circle*{3}} \put(170,10){\circle*{3}}
\put(110,10){\line(1,0){60}} \put(150,30){\line(1,-1){20}} \put(150,30){\line(1,0){20}} \put(140,50){\line(1,-2){10}} \put(140,50){\line(3,-2){30}}
\put(170,10){\line(0,1){20}} \put(130,10){\circle*{3}} \put(150,10){\circle*{3}}

\put(135,-3){$G_2$}
\end{picture}

\vspace{2mm}{\bf Figure 3.}
\end{figure}
\end{center}
\end{eg}

\section{Bipartite graphs}

Throughout this section, $T$ is a connected bipartite graph having at least one edge. The goal of this section is to prove Proposition \ref{Proposition 4.5}, which asserts that if the girth of $G$ is sufficiently large, then the subdivision considered in Proposition \ref{Proposition 3.2} does not change the homotopy type of $\Hom(T,G)$.

\begin{prop}\label{Proposition 4.1}
Let $X$ be a finite tree having at least one edge, let $T$ be a finite connected graph with $\chi(T) = 2$, and let $u : K_2 \rightarrow T$ be a graph homomorphism. Then $u^* : \Hom(T,X) \rightarrow \Hom(K_2,X)$ is a homotopy equivalence.
\end{prop}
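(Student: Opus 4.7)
The plan is to induct on $|V(X)|$, stripping off a dismantlable leaf at each stage via Proposition \ref{Proposition 2.3} until we reach the base case $X = K_2$. Since $X$ is a finite tree with at least one edge, $|V(X)| \geq 2$, so the induction is well-founded.

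For the base case $X = K_2$ with $V(K_2) = \{1,2\}$, I would verify directly that both $\Hom(T,K_2)$ and $\Hom(K_2,K_2)$ are two-point discrete spaces and that $u^*$ is a bijection between them. For any multi-homomorphism $\eta : K_2 \to K_2$, the condition $\eta(1) \times \eta(2) \subset E(K_2)$ forces $\eta(1) \cap \eta(2) = \emptyset$, so each $\eta(i)$ is a singleton; hence $\Hom(K_2,K_2)$ consists precisely of the two graph homomorphisms $K_2 \to K_2$. The same argument propagated along any edge of $T$, combined with the connectedness of $T$, shows that every multi-homomorphism $T \to K_2$ is a graph homomorphism determined by its value on one vertex; since $\chi(T) = 2$, there are exactly two such maps. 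Finally, as $u$ selects an edge of $T$, the composition $c \circ u$ for a $2$-coloring $c$ of $T$ is a non-constant homomorphism $K_2 \to K_2$, and the two colorings of $T$ yield the two distinct homomorphisms $K_2 \to K_2$, so $u^*$ is a bijection.

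For the inductive step with $|V(X)| \geq 3$, I would take an endpoint $v_0$ of a longest path in $X$: this $v_0$ is a leaf whose unique neighbor $v_1$ has degree at least $2$ in $X$. Thus for any other neighbor $w$ of $v_1$ we have $N(v_0) = \{v_1\} \subset N(w)$, so $v_0$ is dismantlable in $X$. Proposition \ref{Proposition 2.3} then supplies homotopy equivalences
$$\Hom(T, X \setminus v_0) \hookrightarrow \Hom(T, X), \qquad \Hom(K_2, X \setminus v_0) \hookrightarrow \Hom(K_2, X),$$
which together with the two instances of $u^*$ form a commutative square. Since $X \setminus v_0$ is still a tree with at least one edge, the inductive hypothesis yields that $u^* : \Hom(T, X \setminus v_0) \to \Hom(K_2, X \setminus v_0)$ is a homotopy equivalence, and the $2$-out-of-$3$ property closes the argument.

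The main subtlety is the base case, where one has to combine the connectedness of $T$, the bipartite condition $\chi(T) = 2$, and the presence of an edge in $T$ to rule out non-singleton multi-homomorphisms $T \to K_2$. The inductive step is essentially a direct application of Proposition \ref{Proposition 2.3}, and the key combinatorial input is the standard fact that any tree on at least three vertices contains a leaf whose unique neighbor has degree at least two.
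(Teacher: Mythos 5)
Your proof is correct and follows essentially the same route as the paper: induction on $|V(X)|$, removing a dismantlable leaf via Proposition \ref{Proposition 2.3} and using two-out-of-three, with base case $X = K_2$. Your base case is in fact more careful than the paper's, which only observes that both Hom complexes are $S^0$ without checking that $u^*$ is a bijection (which is what actually makes a map of two-point discrete spaces a homotopy equivalence), so no changes are needed.
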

\begin{proof}
We prove this by induction of the cardinality of $V(X)$. Suppose that $\# V(X) = 2$, namely, $X = K_2$. Since $\Hom(T,K_2) \cong S^0$ and $\Hom(K_2, K_2) \cong S^0$, it is clear that $u^* : \Hom(T, K_2) \rightarrow \Hom(T, X)$ is a homotopy equivalence. Suppose that $\# V(X) > 2$. Then $X$ has a dismantlable vertex $v$ (a leaf of $X$), and consider the following commutative diagram:
$$\begin{CD}
\Hom(T,X \setminus v) @>>> \Hom(T,X)\\
@V{u^*}VV @VV{u^*}V\\
\Hom(K_2, X \setminus v) @>>> \Hom(K_2,X)
\end{CD}$$
The horizontal arrows are the maps induced by the inclusion $X \setminus v \hookrightarrow X$. Then Proposition \ref{Proposition 2.3} implies that the horizontal arrows are homotopy equivalences. On the other hand, the inductive hypothesis implies that the left vertical arrow is a homotopy equivalence. Thus the right vertical arrow is a homotopy equivalence.
\end{proof}

\begin{lem} \label{Lemma 4.4}
Let $T$ be a finite connected bipartite graph with positive diameter $\Delta$, and $X$ a finite graph whose girth is greater than $2 \Delta + 4$. Then the map
$$u^* : \Hom(T,X) \rightarrow \Hom(K_2,X)$$
induced by a graph homomorphism $u : K_2 \rightarrow T$  is a homotopy equivalence.
\end{lem}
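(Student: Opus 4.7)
My plan is to deduce Lemma 4.4 from Proposition 4.1 by covering $X$ with subtrees and applying Theorem 2.7 as the gluing principle. For each vertex $x \in V(X)$, let $Y_x$ be the subgraph of $X$ induced on the closed ball $B_\Delta(x) = \{ y \in V(X) \mid d_X(x,y) \le \Delta \}$. Since the girth of $X$ exceeds $2\Delta + 4$, in particular exceeds $2\Delta$, each $Y_x$ contains no cycle and is therefore a finite tree. I want $\{ \Hom(T, Y_x)\}_{x \in V(X)}$ to cover $\Hom(T,X)$ and $\{ \Hom(K_2, Y_x)\}_{x \in V(X)}$ to cover $\Hom(K_2, X)$ as families of subcomplexes, with $u^*$ respecting both covers.

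To verify the first covering, take $\eta \in \Hom(T,X)$, fix a basepoint $v_0 \in V(T)$, and pick any $x \in \eta(v_0)$. For any $v \in V(T)$, choose a path $v_0 = u_0, u_1, \dots, u_d = v$ in $T$ of length $d \le \Delta$; iterating the relation $\eta(u_{i-1}) \times \eta(u_i) \subset E(X)$ shows that every $y \in \eta(u_i)$ has distance at most $i$ from $x$ in $X$, so $\eta(v) \subset B_\Delta(x)$. For the second, given $(\sigma, \tau) \in \Hom(K_2,X)$ and any $x \in \sigma$, the relation $\sigma \times \tau \subset E(X)$ forces $\tau \subset N(x)$ and $\sigma \subset N(N(x))$, so $\sigma \cup \tau \subset B_2(x) \subset B_\Delta(x)$, provided $\Delta \ge 2$. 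The boundary case $\Delta = 1$ forces $T = K_2$ and $u^* = \mathrm{id}$, which is trivially a homotopy equivalence.

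For the intersection hypothesis of Theorem 2.7, given a finite subset $\{ x_1, \dots, x_r\} \subset V(X)$, the induced subgraph $F$ on $\bigcap_i B_\Delta(x_i)$ equals $\bigcap_i Y_{x_i}$ and, being a subgraph of each tree $Y_{x_i}$, is a finite forest. A short check gives $\bigcap_i \Hom(T, Y_{x_i}) = \Hom(T, F)$ and similarly for $K_2$, so the required restriction is $u^* : \Hom(T, F) \to \Hom(K_2, F)$. Because $T$ is connected, the image $\bigcup_v \eta(v)$ of any multi-homomorphism out of $T$ is connected and hence lies in a single component of $F$, so both sides decompose as disjoint unions indexed by the components $F_j$ of $F$. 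Isolated-vertex components contribute empty pieces on both sides, and components containing an edge are finite trees with an edge, to which Proposition 4.1 applies directly. Theorem 2.7 then yields the lemma.

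The one step requiring real care is the path-following argument in the first covering, where one must confirm that \emph{every} element of $\bigcup_v \eta(v)$ lies in $B_\Delta(x)$—not merely one representative per $v$. This is exactly what the Cartesian-product form of the edge condition gives; the remainder of the argument is bookkeeping, and the girth bound $> 2\Delta + 4$ leaves ample room for the balls $B_\Delta(x)$ to be trees.
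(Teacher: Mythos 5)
Your proof is correct and follows the same overall strategy as the paper: cover $\Hom(T,X)$ and $\Hom(K_2,X)$ by Hom complexes into small subgraphs of $X$ that are forced to be trees by the girth bound, verify the finite-intersection hypothesis using Proposition \ref{Proposition 4.1}, and glue with Theorem \ref{Theorem 2.7}. The one substantive difference is the choice of cover: the paper uses \emph{all} connected subgraphs of diameter less than $\Delta+2$ and must then prove that a nonempty finite intersection with an edge is again connected (this is where its uniqueness-of-short-non-degenerate-paths argument enters), whereas your cover by induced balls $B_\Delta(x)$ has intersections that are merely forests, and you absorb the disconnectedness into a components decomposition using the connectedness of $T$. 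This trade is legitimate and arguably cleaner. Two small corrections: the parenthetical ``in particular exceeds $2\Delta$'' does not by itself make $B_\Delta(x)$ acyclic (take $X=C_5$ and $\Delta=2$: girth $5>4$ but $B_2(x)=C_5$); the standard two-shortest-paths argument needs girth at least $2\Delta+2$, which the hypothesis girth $>2\Delta+4$ comfortably supplies, so the conclusion stands but the stated reason should be repaired. And in the degenerate case $\Delta=1$, $u$ may be the edge flip rather than the identity, so $u^*$ is an isomorphism of posets rather than literally $\mathrm{id}$ --- still trivially a homotopy equivalence.
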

\begin{proof}
Let $\mathcal{Y}$ be the family of connected subgraphs of $X$ whose diameter is smaller than $\Delta +2$. Then every multi-homomorphism $\eta$ from $T$ to $X$ factors through $Y$ for some $Y \in \mathcal{F}$. Thus we have 
$$\Hom(T,X) = \bigcup_{Y \in \mathcal{Y}} \Hom(T, Y).$$
Similarly we have
$$\Hom(K_2,X) = \bigcup_{Y \in \mathcal{Y}} \Hom(K_2, Y).$$
Thus to see that $u^* : \Hom(T,X) \rightarrow \Hom(K_2, X)$ is a homotopy equivalence, it suffices to see the following assertion by Theorem \ref{Theorem 2.7}: For a positive integer $r$ and a finite subset $\{ Y_1, \cdots, Y_r\}$ of $\mathcal{Y}$, the map
\begin{eqnarray} \label{eqn 4.1}
u^* : \bigcap_{i = 1}^r \Hom(T, Y_i) \rightarrow \bigcap_{i =1}^r \Hom(K_2, Y_i)
\end{eqnarray}
is a homotopy equivalence.

We first note that
$$\bigcap_{i = 1}^r \Hom(T, Y_i) = \Hom \Big(T, \bigcap_{i=1}^r Y_i \Big) , \; \bigcap_{i=1}^r \Hom(K_2, Y_i) = \Hom \Big( K_2, \bigcap_{i=1}^r Y_i \Big)$$
Thus if every vertex of $Y_1 \cap \cdots \cap Y_r$ is isolated, then the both sides of the map (\ref{eqn 4.1}) are empty and hence the map is a homotopy equivalence.

On the other hand, suppose that $Y_1 \cap \cdots \cap Y_r$ has an edge. By Proposition \ref{Proposition 4.1}, it suffices to show that $Y_1 \cap \cdots \cap Y_r$ is a tree. Note that $Y_1$ has no embedded cycle since its diameter is smaller than $\Delta + 2$ and the girth of $X$ is greater than $2 \Delta + 4$. Therefore $Y_1 \cap \cdots \cap Y_r$ has no embedded cycles. Thus it suffices to see that $Y_1 \cap \cdots \cap Y_r$ is connected.

Before giving the proof, we note the following: Let $x$ and $y$ be vertices of $X$. We call a graph homomorphism $\varphi : L_n \rightarrow X$ a {\it path joining $x$ to $y$} if $\varphi (0) = x$ and $\varphi(n) = y$. Moreover, if there is no $i \in \{ 1,\cdots, n -1\}$ such that $\varphi(i -1) = \varphi (i+1)$, we call the path $\varphi$ {\it non-degenerate}. It is straightforward to show that a non-degenerate path $\varphi : L_n \rightarrow X$ joining $x$ to $y$ with $n \leq \Delta + 1$ is unique since the girth of $X$ is greater than $2 \Delta +4$.

We now turn to the proof that $Y_1 \cap \cdots \cap Y_r$ is connected. Suppose that $Y_1 \cap \cdots \cap Y_r$ is not empty and let $x, y \in V(Y_1 \cap \cdots \cap Y_r)$. By the definition of $\mathcal{Y}$, $Y_i$ is connected and hence is a tree for each $i = 1,\cdots, r$. Therefore let $\varphi_i : L_{n_i} \rightarrow Y_i$ be the shortest path joining $x$ with $y$ in $Y_i$. Since $Y_i$ is a tree whose diameter is smaller than $\Delta + 2$, we have that $n_i < \Delta +2$. Thus it follows from the previous paragraph that $\varphi_1 = \cdots = \varphi_r$. This implies that $x$ and $y$ belong to the same connected component of $Y_1 \cap \cdots \cap Y_r$.
\end{proof}

\begin{prop} \label{Proposition 4.5}
Let $T$ be a finite connected bipartite graph with positive diameter $\Delta$, and $X$ a finite graph with girth greater than $2 \Delta + 4$. For each edge $e$ of $X$, the map $r_e : X_e \rightarrow X$ induces a homotopy equivalence $r_{e*} : \Hom(T,X_e) \rightarrow \Hom(T,X)$.
\end{prop}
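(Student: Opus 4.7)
The plan is to reduce to the case $T = K_2$ via a naturality square and then invoke Proposition \ref{Proposition 3.2}. Since $T$ is bipartite, connected, and has positive diameter, it possesses at least one edge, so I fix a graph homomorphism $u : K_2 \to T$. The naturality of the Hom complex in both arguments yields the commutative diagram
$$\begin{CD}
\Hom(T, X_e) @>{r_{e*}}>> \Hom(T, X) \\
@V{u^*}VV @VV{u^*}V \\
\Hom(K_2, X_e) @>{r_{e*}}>> \Hom(K_2, X)
\end{CD}$$
and it suffices to show that the two vertical arrows and the bottom horizontal arrow are homotopy equivalences; a formal $2$-out-of-$3$ argument will then deliver the top arrow.

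For the vertical arrows I would first check that $g(X_e) \geq g(X) > 2\Delta + 4$. A cycle of $X_e$ that avoids the two new vertices $0$ and $1$ is already a cycle of $X$, while any cycle passing through $0$ must enter and leave via its only two neighbors $v$ and $1$, and symmetrically for $1$; so such a cycle traverses the entire path $v, 0, 1, w$ and corresponds to a cycle of $X$ through $e$ lengthened by two. Lemma \ref{Lemma 4.4} then applies to both $X$ and $X_e$ with the chosen $u$, and both $u^*$ are homotopy equivalences.

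For the bottom arrow I would verify the hypothesis of Proposition \ref{Proposition 3.2}, namely that no graph homomorphism $\varphi : L_3 \to X \setminus e$ satisfies $\varphi(0) = v$ and $\varphi(3) = w$. Since $g(X) > 6$ the graph $X$ has no loops, so $v \neq w$, and any coincidence among the values $\varphi(0), \varphi(1), \varphi(2), \varphi(3)$ would force either a loop of $X$ or an edge of $X$ equal to $e$, both forbidden. Hence such a $\varphi$ must be an embedded path of length three from $v$ to $w$ avoiding $e$, which together with $e$ yields an embedded $4$-cycle in $X$, contradicting $g(X) > 4$. Proposition \ref{Proposition 3.2} therefore shows the bottom arrow is a homotopy equivalence, and the commutative square forces $r_{e*} : \Hom(T, X_e) \to \Hom(T, X)$ to be one as well. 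The only non-formal step is this small case analysis for the $L_3$ condition; everything else is a direct consequence of the earlier results in this section.
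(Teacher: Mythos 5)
Your proof is correct and follows exactly the paper's argument: the same naturality square with $u^*:\Hom(T,-)\to\Hom(K_2,-)$, with the vertical arrows handled by Lemma \ref{Lemma 4.4} and the bottom arrow by Proposition \ref{Proposition 3.2}. The only difference is that you spell out the two verifications the paper leaves implicit (that $g(X_e)\geq g(X)$ so Lemma \ref{Lemma 4.4} applies to $X_e$, and that girth $>4$ rules out the $L_3$ homomorphism), both of which are done correctly.
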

\begin{proof}
Consider the commutative diagram
$$\begin{CD}
\Hom(T,X_e) @>{r_{e*}}>> \Hom(T,X)\\
@VVV @VVV\\
\Hom(K_2, X_e) @>{r_{e*}}>> \Hom(K_2,X).
\end{CD}$$
The vertical arrows are homotopy equivalences by Lemma \ref{Lemma 4.4}, and the lower horizontal arrow is a homotopy equivalence by Proposition \ref{Proposition 3.2}.
\end{proof}

\section{Proof of the main theorem}

The purpose of this section is to complete the proof of Theorem 1.1. We actually prove Theorem \ref{Theorem 5.1} and Corollary \ref{Corollary 5.5}, which are a little generalized assertions.

Let $\mathcal{F}$ be a (not necessarily small) family of finite connected graphs and suppose that there is a positive integer $m$ which satisfies the following conditions:

\begin{itemize}
\item The diameter of a graph belonging to $\mathcal{F}$ is smaller than $m$.
\item If $T \in \mathcal{F}$ is not bipartite, then the odd girth of $T$ is smaller than $m$.
\end{itemize}
In this case, we call a family $\mathcal{F}$ {\it uniformly small}. Note that if $\mathcal{F}$ is a finite family of finite graphs, then $\mathcal{F}$ is uniformly small. Therefore the main theorem (Theorem \ref{Theorem 1.1}) is deduced from the following theorem and Corollary \ref{Corollary 5.5} since $\Hom(T_1 \coprod T_2, G) \cong \Hom(T_1,G) \times \Hom(T_2, G)$.

\begin{thm}\label{Theorem 5.1}
Let $\mathcal{F}$ be a uniformly small family of graphs and let $G$ be a non-bipartite graph. Then for every positive integer $n$, there is an inclusion $f:G \hookrightarrow H$ such that $f$ induces a homotopy equivalence $\Hom(T,G) \rightarrow \Hom(T,H)$ for every $T \in \mathcal{F}$ but $\chi(H) > n$. Moreover, if $G$ is finite and connected, then one can take $H$ to be finite and connected.
\end{thm}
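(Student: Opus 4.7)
Let $m$ be the constant witnessing the uniform smallness of $\mathcal{F}$. The plan follows the outline in the introduction: build a ``witness'' graph $Y$ carrying homomorphisms to both a high-chromatic graph $X$ and to $G$, and then glue $X$ and $G$ along a cylinder over $Y$. First, by Theorem \ref{Theorem 1.2} I pick a finite connected $X$ with $\chi(X)>n$ and $g(X)>2m+4$. Then I construct $Y$ by iteratively applying $G\mapsto G_e$ from Section 3 to every edge of $X$, so that each original edge is replaced by a path of some large odd length $\ell$. Since $G_e$ never decreases girth, $g(Y)>2m+4$ is preserved throughout, Proposition \ref{Proposition 4.5} applies at every step, and the natural projection $r:Y\to X$ induces a homotopy equivalence $r_*:\Hom(T,Y)\to\Hom(T,X)$ for every bipartite $T\in\mathcal{F}$ (whose diameter is less than $m$). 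For non-bipartite $T\in\mathcal{F}$, uniform smallness supplies an odd cycle in $T$ of length less than $m$; extracting one vertex from each image of a multi-homomorphism along that cycle would produce a short odd closed walk---and hence a short odd cycle---in $X$ or $Y$, contradicting the girth bounds, so $\Hom(T,X)=\Hom(T,Y)=\emptyset$ and $r_*$ is trivially an equivalence.

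Because $G$ is non-bipartite, after passing to a non-bipartite connected component there is an integer $L$ such that any two of its vertices admit walks of every length $\geq L$; for $\ell\geq L$ I build $g:Y\to G$ by choosing any vertex-map $V(X)\to V(G)$ and filling in a walk of length $\ell$ in $G$ along each subdivided edge of $X$. I then form
\[
H = \bigl(X\sqcup (Y\times I_k)\sqcup G\bigr)/{\sim},\qquad (y,0)\sim r(y),\ (y,k)\sim g(y),
\]
for a sufficiently large integer $k$. By construction $X$ and $G$ embed as subgraphs of $H$, so $\chi(H)\geq\chi(X)>n$; $H$ is finite and connected whenever $G$ is; and a direct check on the iterated $G_e$ shows that $r$ never identifies $Y$-adjacent vertices, so when $G$ is simple $H$ is also simple.

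It remains to show the inclusion $G\hookrightarrow H$ induces a homotopy equivalence on $\Hom(T,-)$ for every $T\in\mathcal{F}$. My plan is to identify $\Hom(T,H)$ with the homotopy pushout of $\Hom(T,X)\xleftarrow{r_*}\Hom(T,Y)\xrightarrow{g_*}\Hom(T,G)$ by covering $H$ with the subgraphs $H_X=X\cup(Y\times I_{[0,k-1]})$ and $H_G=G\cup(Y\times I_{[1,k]})$ (using the identifications), whose intersection is $\times$-homotopy equivalent to $Y$ via the projection of Section 2.3. Combining Theorem \ref{Theorem 2.7} with the $\times$-homotopy invariance of $\Hom(T,-)$ recalled there should then yield
\[
\Hom(T,H)\simeq \Hom(T,X)\cup_{\Hom(T,Y)}\Hom(T,G).
\]
For bipartite $T$ the equivalence $r_*$ collapses this pushout onto $\Hom(T,G)$ through the natural map induced by $G\hookrightarrow H$; for non-bipartite $T$ the same odd-girth argument applied to $H_X$ makes $\Hom(T,H_X)$ and $\Hom(T,Y)$ empty, and the pushout again reduces to $\Hom(T,G)$. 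The main obstacle I anticipate is exactly this pushout step---making the intersection of the cover genuinely behave like $Y$ after applying $\Hom(T,-)$ and checking that the induced cover of $\Hom(T,H)$ satisfies the hypotheses of Theorem \ref{Theorem 2.7}---while a secondary point is simultaneously choosing $\ell$ and $k$ so that all girth, walk-length, and emptiness requirements hold at once.
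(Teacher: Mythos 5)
Your proposal follows essentially the same route as the paper's proof: subdivide a high-girth, high-chromatic $X$ into $Y$, map $Y$ into $G$ using an odd closed walk, glue $X$ and $G$ along a cylinder over $Y$, cover $H$ by the two halves, identify $\Hom(T,H)$ with the homotopy pushout of $\Hom(T,X)\leftarrow\Hom(T,Y)\rightarrow\Hom(T,G)$, and dispose of non-bipartite $T$ by emptiness of $\Hom(T,X)$. The pushout step you flag as the main obstacle is exactly what the paper resolves, using the homotopy-pushout comparison lemmas of Welker--Ziegler--\v{Z}ivaljevi\'c (Lemmas \ref{Lemma 5.2}--\ref{Lemma 5.4}) rather than Theorem \ref{Theorem 2.7}; your girth bound $g(X)>2m+4$ is the correct one for applying Proposition \ref{Proposition 4.5}.
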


To prove this theorem, we recall some facts of homotopy pushouts. Here we consider that {\it all maps between CW-complexes are cellular}, for simplicity. For a more general treatment, we refer to \cite{WZZ}.

Let $f: X \rightarrow Y$ and $g: X \rightarrow Z$ be cellular maps. The homotopy pushout of $f$ and $g$ is the space
$$E(f,g) = ( X \times [0,1] ) \coprod Y \coprod Z / \sim,$$
where the equivalence relation is generated by $(x,0) \sim f(x)$ and $(x,1) \sim g(x)$ for every $x \in X$.

\begin{lem}[Proposition 3.7 of \cite{WZZ}] \label{Lemma 5.2}
Consider a commutative diagram
$$\begin{CD}
Y @<f<< X @>g>> Z\\
@VVV @VVV @VVV\\
Y' @<<< X' @>>> Z'
\end{CD}$$
of CW-complexes. If the all vertical arrows are homotopy equivalences, then the map $E(f,g) \rightarrow E(f',g')$ is a homotopy equivalence.
\end{lem}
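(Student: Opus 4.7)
The plan is to recognize $E(f,g)$ as an ordinary pushout along a cofibration, so that the claim reduces to the classical gluing lemma for pushouts in the category of CW-complexes. Concretely, $E(f,g)$ is the pushout of the span
$$Y \coprod Z \; \xleftarrow{\; f \coprod g \;} \; X \coprod X \; \xrightarrow{\; j \;} \; X \times [0,1],$$
where $j(x) = (x,0)$ on the first summand and $j(x) = (x,1)$ on the second. The map $j$ is the inclusion of a subcomplex of a CW-complex, and is therefore a cofibration. The same span with all objects primed expresses $E(f',g')$ as a pushout along a cofibration, and the data of the original commutative diagram supplies a morphism from one span to the other.

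Next I would check that the three vertical maps of this morphism of spans are all homotopy equivalences. For the left vertex this is the disjoint union of the given homotopy equivalences $Y \to Y'$ and $Z \to Z'$; for the top vertex it is two copies of $X \to X'$; and for the right vertex it is the product of $X \to X'$ with the identity of $[0,1]$, which is a homotopy equivalence because the product with a fixed CW-complex preserves homotopy equivalences. Each of these verifications is routine.

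Finally, I would invoke the gluing lemma for CW-complexes: given a morphism of pushout diagrams in which one leg of each pushout is a cofibration and the three vertex maps are homotopy equivalences, the induced map on pushouts is a homotopy equivalence. Applied to the two spans above, this yields the desired homotopy equivalence $E(f,g) \to E(f',g')$. The only substantive step is the appeal to the gluing lemma, which in the CW setting is standard (and in any case is exactly what is recorded in Proposition~3.7 of \cite{WZZ}); the rest of the argument is formal manipulation of mapping cylinders.
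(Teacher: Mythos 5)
Your argument is correct. Note that the paper itself gives no proof of this lemma at all: it is stated as a quotation of Proposition~3.7 of \cite{WZZ}, so there is nothing in the text to compare against line by line. What you supply is the standard derivation: writing the double mapping cylinder $E(f,g)$ as the genuine pushout of $Y \coprod Z \leftarrow X \coprod X \rightarrow X \times [0,1]$, observing that the right leg is the inclusion of a subcomplex (hence a cofibration), checking that the three comparison maps of spans are homotopy equivalences, and invoking the gluing lemma. All three verifications you list are indeed routine, and the commutativity of the new squares follows directly from the commutativity of the given diagram, so the reduction is complete. One small inaccuracy in your closing parenthetical: Proposition~3.7 of \cite{WZZ} is essentially the statement being proved (homotopy invariance of the homotopy pushout), not the gluing lemma itself; the gluing lemma is the more primitive input, found for instance in tom Dieck's or May's textbooks, or obtainable from left properness of the Str\o m model structure. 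That is a matter of attribution only and does not affect the validity of the proof.
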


A commutative square
\begin{eqnarray} \label{diagram 5}
\begin{CD}
X @>f>> Y\\
@VgVV @VVV\\
Z @>>> W
\end{CD}
\end{eqnarray}
of CW-complexes is a {\it homotopy pushout square} if the map $E(f,g) \rightarrow W$ is a homotopy equivalence.

\begin{lem}[Lemma 2.4 of \cite{WZZ}] \label{Lemma 5.3}
If the diagram (\ref{diagram 5}) is a pushout diagram and either $f$ or $g$ is an inclusion, then the diagram (\ref{diagram 5}) is a homotopy pushout square.
\end{lem}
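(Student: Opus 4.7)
The plan is to reduce the statement to the classical gluing lemma for pushouts along cofibrations by factoring through a mapping cylinder. Without loss of generality, assume $f : X \to Y$ is a CW-subcomplex inclusion (the case where $g$ is an inclusion is treated symmetrically). Form the mapping cylinder
\[
M_f = (X \times [0,1]) \coprod Y \, / \, (x,0) \sim f(x),
\]
with the canonical inclusion $i_1 : X \to M_f$, $x \mapsto [(x,1)]$, and the canonical strong deformation retraction $\pi : M_f \to Y$ collapsing the cylinder factor, so that $\pi \circ i_1 = f$. Then $\pi$ is a homotopy equivalence, and $i_1$ is itself a CW-subcomplex inclusion.

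Unpacking the definition of $E(f,g)$ identifies it canonically with the ordinary pushout $M_f \cup_X Z$ of the span $M_f \xleftarrow{i_1} X \xrightarrow{g} Z$: one may first collapse $X \times \{0\}$ to $f(X) \subset Y$ to obtain $M_f$, and then collapse $X \times \{1\}$ to $g(X) \subset Z$. The natural comparison map $E(f,g) \to W$ appearing in the definition of a homotopy pushout square is therefore the map on ordinary pushouts induced by the commutative diagram
\[
\begin{CD}
M_f @<{i_1}<< X @>g>> Z\\
@V{\pi}VV @| @|\\
Y @<f<< X @>g>> Z,
\end{CD}
\]
in which all three vertical arrows are homotopy equivalences and the two left horizontal arrows are CW-subcomplex inclusions.

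It remains to invoke the classical gluing lemma for pushouts along cofibrations of CW-complexes: given two spans in each of which the left leg is a cofibration, together with a vertical comparison whose three components are homotopy equivalences, the induced map on ordinary pushouts is a homotopy equivalence. Applied to the diagram above, this yields that $E(f,g) \to W$ is a homotopy equivalence, which is precisely the condition for the given square to be a homotopy pushout square. The only point requiring verification beyond this invocation is that $i_1 : X \to M_f$ is a cofibration, which is immediate from the standard CW-structure on the mapping cylinder (one builds $M_f$ from $Y$ by attaching cells indexed by those of $X$, with $X \times \{1\}$ appearing as a subcomplex). I do not anticipate any real obstacle, as the argument is a routine packaging of well-established facts about mapping cylinders and pushouts along cofibrations.
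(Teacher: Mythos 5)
Your argument is correct. Note that the paper itself offers no proof of this lemma: it is quoted verbatim as Lemma 2.4 of the cited reference of Welker, Ziegler, and \v{Z}ivaljevi\'c, so there is no in-paper argument to compare against. Your reduction is the standard one and it goes through: the identification of the double mapping cylinder $E(f,g)$ with the ordinary pushout $M_f \cup_X Z$ (performing the two identifications in stages) is correct, the comparison map to $W$ is indeed the map of pushouts induced by $(\pi, \mathrm{id}_X, \mathrm{id}_Z)$, and both left legs $i_1 : X \to M_f$ and $f : X \to Y$ are cofibrations (the first because the mapping cylinder of a cellular map carries a CW structure with $X \times \{1\}$ as a subcomplex, the second by the hypothesis that $f$ is a subcomplex inclusion), so the classical gluing lemma applies. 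The symmetry reduction to the case where $f$ is the inclusion is harmless, since swapping the ends of the interval identifies $E(f,g)$ with $E(g,f)$ compatibly with the maps to $W$. The one point worth making explicit in a written version is that ``inclusion'' in the paper's convention (all maps cellular) means subcomplex inclusion, which is what licenses treating $f$ as a cofibration; with that understood, your proof is a complete and self-contained substitute for the external citation.
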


\begin{lem} \label{Lemma 5.4}
Let $f: X \rightarrow Y$ and $g: X \rightarrow Z$ be cellular maps. If $f$ is a homotopy equivalence, then the inclusion $Z \hookrightarrow E(f,g)$ is a homotopy equivalence.
\end{lem}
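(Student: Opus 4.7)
The plan is to reduce the statement to Lemma \ref{Lemma 5.2} by comparing $E(f,g)$ with the homotopy pushout of a simpler span in which $f$ is replaced by the identity. Specifically, I would invoke Lemma \ref{Lemma 5.2} for the commutative diagram
$$
\begin{CD}
X @<{\text{id}_X}<< X @>{g}>> Z\\
@V{f}VV @V{\text{id}_X}VV @V{\text{id}_Z}VV\\
Y @<{f}<< X @>{g}>> Z
\end{CD}
$$
of CW-complexes, in which all three vertical arrows are homotopy equivalences ($f$ by hypothesis, the other two trivially). This yields a homotopy equivalence $\Phi : E(\text{id}_X, g) \to E(f,g)$.

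Next, I would identify $E(\text{id}_X, g)$ with the mapping cylinder $M(g)$ of $g$. In the quotient defining $E(\text{id}_X, g) = (X \times [0,1]) \coprod X \coprod Z / \sim$, the relation $(x,0) \sim \text{id}_X(x) = x$ merely absorbs the ``free'' copy of $X$ into the slice $X \times \{0\}$, so the only remaining identification is $(x,1) \sim g(x)$; this is exactly the defining relation of $M(g)$. The standard deformation retraction of $M(g)$ onto $Z$ then shows that the inclusion $\iota : Z \hookrightarrow E(\text{id}_X, g)$ is a homotopy equivalence.

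Finally, by inspection of the map produced by Lemma \ref{Lemma 5.2}, $\Phi$ restricts to $\text{id}_Z$ on the $Z$-summand; hence $\Phi \circ \iota$ coincides with the natural inclusion $Z \hookrightarrow E(f,g)$. Being a composition of two homotopy equivalences, this inclusion is itself a homotopy equivalence, which is the desired conclusion. I do not anticipate any real obstacle; the only item worth double-checking is that the map furnished by Lemma \ref{Lemma 5.2} really is the identity on the $Z$-summand, but this is immediate once one writes out the induced map on pushouts summand by summand.
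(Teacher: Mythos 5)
Your proposal is correct and follows essentially the same route as the paper: apply Lemma \ref{Lemma 5.2} to the diagram comparing the span $(\mathrm{id}_X, g)$ with $(f,g)$, observe that $Z \hookrightarrow E(\mathrm{id}_X, g)$ is a homotopy equivalence (the paper states this as clear; you justify it via the mapping-cylinder identification), and compose. The extra check that the comparison map is the identity on the $Z$-summand is a worthwhile detail the paper leaves implicit.
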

\begin{proof}
Consider the commutative diagram
$$\begin{CD}
X @= X @>g>> Z\\
@VfVV @| @|\\
Y @<f<< X @>g>> Z.
\end{CD}$$
Then Lemma \ref{Lemma 5.2} implies that $E({\rm id}_X , g) \rightarrow E(f,g)$ is a homotopy equivalence. Clearly, the inclusion $Z \hookrightarrow E({\rm id}_X,Z)$ is a homotopy equivalence. Thus the composition $Z \hookrightarrow E({\rm id}_X, g) \rightarrow E(f,g)$ is a homotopy equivalence.
\end{proof}

Now we turn to the proof of Theorem 5.1.

\vspace{2mm}
\noindent {\it Proof of Theorem 5.1.} 
Let $m$ be a positive integer which satisfies the conditions (1) and (2) in the beginning of this section. Let $X$ be a finite connected graph such that $\chi(X) > n$ and $g(X) \geq m$.  Since $G$ is not bipartite, there is an integer $k$ such that there is a graph homomorphism $C_{2k+1} \rightarrow G$. Let $Y$ be the graph obtained by replacing each edge of $X$ by the line $L_{2k+1}$ with length $2k+1$. It is clear that there is a graph homomorphism from $Y$ to $C_{2k+1}$, and hence there is a graph homomorphism $g: Y \rightarrow G$. Since $Y$ is obtained by iterating subdivisions considered in Proposition \ref{Proposition 3.2}, it follows from Proposition \ref{Proposition 4.5} that there is a graph homomorphism $f:Y \rightarrow X$ which induces a homotopy equivalence $\Hom(T,Y) \rightarrow \Hom(T,X)$ if $T$ is bipartite.

Let $H$ be the colimit of the diagram
$$\begin{CD}
X @<f<< Y @>{\iota_0}>> Y \times I_m @<{\iota_m}<< Y @>g>> G,
\end{CD}$$
where $\iota_j : Y \rightarrow Y \times I_m$ is defined by the correspondence $y \mapsto (y,j)$. Roughly speaking, $H$ is obtained by attaching the ``ends of the cylinder $Y \times I_m$" by the graph homomorphisms $f:Y \rightarrow X$ and $g: Y \rightarrow G$. Let $A = X \cup_Y Y \times I_{m-1}$ and $B = Y \times I_{[1,m]} \cup_Y G$ and consider $A$ and $B$ as subgraphs of $H$.

Since the diameter of $T$ is smaller than $(m-2)$, every multi-homomorphism from $T$ to $H$ factors through $A$ or $B$. Thus we have
$$\Hom(T,H) = \Hom(T,A) \cup \Hom(T,B).$$

Suppose that $T$ is non-bipartite. Since the odd girth of $T$ is smaller than the girth of $X$, there is no graph homomorphism from $T$ to $X$. Hence there is no graph homomorphism from $T$ to $A$ since there is a graph homomorphism from $A$ to $X$. Thus we have $\Hom(T,A) = \emptyset$ and $\Hom(T,H) = \Hom(T,B)$. Since the inclusion $G \hookrightarrow B$ is a $\times$-homotopy equivalence, we have that $\Hom(T,G) \hookrightarrow \Hom(T,B) = \Hom(T,H)$ is a homotopy equivalence. This completes the proof in case $T$ is not bipartite.

Next suppose that $T$ is bipartite. Consider the following diagram
$$\begin{CD}
|\Hom(T,A)| @<<< |\Hom(T,A \cap B)| @>>> |\Hom(T,B)|\\
@VVV @VVV @VVV\\
|\Hom(T,X)| @<{\simeq}<< |\Hom(T,Y)| @>>> |\Hom(T,G)|.
\end{CD}$$
Since the graph homomorphisms $A \rightarrow X$, $A \cap B \cong Y \times I_{[1,m-1]} \rightarrow Y$, and $B \rightarrow G$ are $\times$-homotopy equivalences, we have that the vertical arrows in the above diagram are homotopy equivalences. Let $E$ be the homotopy pushout of the upper horizontal arrows and $E'$ the homotopy pushout of the lower horizontal arrows. Since the upper horizontal arrows are inclusions, the natural map $E \rightarrow |\Hom(T,H)|$ is a homotopy equivalence (Lemma \ref{Lemma 5.3}). Then we have the commutative diagram
$$\begin{CD}
|\Hom(T,G)| @>{\simeq}>> E'\\
@V{\simeq}VV @AA{\simeq}A \\
|\Hom(T,B)| @>>> E @>{\simeq}>> |\Hom(T,H)|.\\
\end{CD}$$
Since the inclusion $G \hookrightarrow B$ is a $\times$-homotopy equivalence, the left vertical arrow is a homotopy equivalence. Other homotopy equivalences are deduced from Lemma \ref{Lemma 5.2} and Lemma \ref{Lemma 5.4}. Thus we have that the composition $|\Hom(T,G)| \hookrightarrow |\Hom(T,B)| \hookrightarrow E \rightarrow |\Hom(T,H)|$ is a homotopy equivalence.
\qed

\vspace{2mm}
We conclude this section with the following corollary.

\begin{cor}\label{Corollary 5.5}
Let $\mathcal{F}$ be a uniformly small family of flipping $\Z_2$-graphs. Then for every graph $G$ with $\chi(G) \geq 3$ and for every positive integer $m$, there is an inclusion $f:G \hookrightarrow H$ such that $f$ induces a $\Z_2$-homotopy equivalence $\Hom(T,G) \rightarrow \Hom(T,H)$ for every $T \in \mathcal{F}$ but $\chi(H) > m$. Moreover, if $G$ is finite and connected, then we can take $H$ to be finite and connected.
\end{cor}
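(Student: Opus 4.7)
The strategy is to reduce to the non-equivariant Theorem \ref{Theorem 5.1} and then upgrade the resulting plain homotopy equivalence to a $\Z_2$-homotopy equivalence using Theorem \ref{Bredon}. The notion of a uniformly small family depends only on the underlying graphs, so forgetting the $\Z_2$-actions leaves $\mathcal{F}$ uniformly small. The hypothesis $\chi(G) \geq 3$ in particular makes $G$ non-bipartite, so Theorem \ref{Theorem 5.1} applies and produces an inclusion $f \colon G \hookrightarrow H$ with $\chi(H) > m$ such that $f_* \colon \Hom(T,G) \to \Hom(T,H)$ is a homotopy equivalence for every $T \in \mathcal{F}$.

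The next step is to verify that $H$ can be arranged to be simple, so that $\Hom(T,H)$ will be a free $\Z_2$-CW-complex. We may assume $G$ itself is simple, as $\chi(G) \geq 3$ rules out looped vertices. Inspecting the construction in the proof of Theorem \ref{Theorem 5.1}, the auxiliary graph $X$ supplied by Theorem \ref{Theorem 1.2} can be chosen simple; its $L_{2k+1}$-subdivision $Y$ is then simple; and although $I_m$ has a loop at every vertex, the categorical product $Y \times I_m$ is simple because a vertex $(y,i)$ in it is looped only if both $y \in Y$ and $i \in I_m$ are looped. Finally $H$ is built as a pushout of these simple graphs along graph homomorphisms $f \colon Y \to X$ and $g \colon Y \to G$, and a quick case check shows that the identifications create no new loops.

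Now, since each $T \in \mathcal{F}$ is a flipping $\Z_2$-graph and both $G$ and $H$ are simple, the discussion in Section 2.2 implies that $\Hom(T,G)$ and $\Hom(T,H)$ are free $\Z_2$-CW-complexes and that the order-preserving map $f_*$ induced by the graph homomorphism $f$ is $\Z_2$-equivariant. Because $f_*$ is already a non-equivariant homotopy equivalence by Theorem \ref{Theorem 5.1}, Theorem \ref{Bredon} promotes it to a $\Z_2$-homotopy equivalence. The ``finite and connected'' addendum follows directly from the corresponding clause of Theorem \ref{Theorem 5.1}.

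The main obstacle I anticipate is the bookkeeping needed to confirm that the construction in Theorem \ref{Theorem 5.1} actually yields a simple graph $H$; once that is settled, the equivariant upgrade is immediate from Theorem \ref{Bredon} with no additional argument required.
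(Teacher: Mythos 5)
Your proposal follows essentially the same route as the paper: apply Theorem \ref{Theorem 5.1} (noting $\chi(G)\geq 3$ forces $G$ to be non-bipartite), observe that $\Hom(T,G)$ and $\Hom(T,H)$ are free $\Z_2$-complexes because $T$ is flipping and the graphs are simple, and invoke Theorem \ref{Bredon} to upgrade the homotopy equivalence to a $\Z_2$-homotopy equivalence; your check that the construction of $H$ preserves simplicity is the same point the paper records in the ``moreover'' clause of Theorem \ref{Theorem 1.1}. One correction: the claim that ``$\chi(G)\geq 3$ rules out looped vertices'' is false --- a looped vertex gives $\chi(G)=+\infty\geq 3$. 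The reduction to simple $G$ is still valid, but for a different reason: if $G$ has a looped vertex then $\chi(G)=+\infty>m$ already, so one simply takes $H=G$ with $f$ the identity, which is exactly how the paper disposes of that case before assuming $G$ simple.
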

\begin{proof}
If $G$ has a looped vertex then put $H = G$. If $G$ has no looped vertex, then the graph $H$ constructed in the proof of Theorem \ref{Theorem 5.1} has the desired properties. Indeed, since $G$ and $H$ are simple and $T \in \mathcal{F}$ is a flipping $\Z_2$-graph, we have that $\Hom(T,G)$ and $\Hom(T,H)$ are free $\Z_2$-complexes. Thus Theorem \ref{Bredon} implies that $i_* : \Hom(T,G) \rightarrow \Hom(T,H)$ is a $\Z_2$-homotopy equivalence.
\end{proof}

\end{document}